\newcommand{\inv}[1]{{#1}^{\vee}}
\newcommand{\g}[3]{g(#1,#2,#3)}
\newcommand{\kro}[3]{\g{#1}{#2}{#3}}
\newcommand{\pleth}[3]{a_{#1,#2}^{#3}}
\newcommand{\CC}{\mathbb{C}}
\newcommand{\ZZ}{\mathbb{Z}}
\newcommand{\boxit}[3]{\Box_{#1,#2}(#3)}
\renewcommand{\P}[1]{\mathcal{P}(#1)}
\newcommand{\PP}[1]{\mathcal{P}^+(#1)}
\newcommand{\Sym}{\textit{Sym}}
\newcommand{\tab}[2]{\mathcal{T}_{#1}(#2)}
\newtheorem{theorem}{Theorem}
\newtheorem{proposition}{Proposition}
\newtheorem{corollary}{Corollary}
\newtheorem{lemma}{Lemma}
{\theoremstyle{definition}
\newtheorem*{remark}{Remark}
\newtheorem{example}{Example}
}
\title[Rectangular Symmetries]{Rectangular Symmetries for coefficients of symmetric functions}
\date{\today}
\author{Emmanuel Briand}
\address{
Emmanuel Briand,
Departamento de  Matem\'atica Aplicada I,
Escuela T\'ecnica Superior de Ingenier\'{\i}a Inform\'atica,
Universidad de Sevilla,
Avda. Reina Mercedes, 
41012 Sevilla,
Spain.}
\email{ebriand@us.es}
\author{Rosa Orellana}
\address{Rosa Orellana, Dartmouth College, Mathematics Department, 6188 Kemeny Hall, Hanover, NH 03755, USA.}
\email{rosa.c.orellana@dartmouth.edu}
\author{Mercedes Rosas}
\address{
 Mercedes Rosas,
Departamento de \'Algebra,
Facultad De Matem\'aticas,
Universidad de Sevilla,
Avda. Reina Mercedes, 
41012 Sevilla,
Spain.}
\email{mrosas@us.es}
\begin{document}

\begin{abstract} We show that some of the main structural constants for symmetric functions (Littlewood-Richardson coefficients, Kronecker coefficients, plethysm coefficients, and the Kostka--Foulkes polynomials) share symmetries related to the operations of taking complements with respect to rectangles and adding rectangles. \end{abstract}

\maketitle

\section{Introduction}
Four families of coefficients of great importance in the theory of symmetric functions are: the Kostka numbers (and their deformations, the Kostka--Foulkes polynomials), the Littlewood-Richardson coefficients, the Kronecker coefficients, and the plethysm coefficients.  The importance of these families of numbers come from their applicability to many different fields of mathematics such as representation theory, invariant theory and algebraic geometry, as well as physics and computer science.

The Littlewood-Richardson and Kronecker coefficients satisfy many well-known symmetries involving permutation of indices and conjugation.
 
These symmetries often lead to better understanding of the objects they enumerate, to simplifications in the number of cases in proofs, and in some cases, can be used to simplify computations.   In this article we present symmetries for Littlewood-Richardson, Kronecker, and plethysm coefficients, and for the Kostka-Foulkes polynomials, that involve the operations of (i)  taking complements in rectangles, or (ii) adding ``tall"  rectangles to the parts. The symmetries of type (i) actually follow from duality between representations of general linear groups, and those of type (ii) from factoring by determinant representations. We give  an elementary approach using the language of symmetric polynomials (instead of representations). In this language, these symmetries appear as evaluation at the inverses of the variables, for type (i), and factorization by the product of the variables, for type (ii).

In more detail, let  $\boxit{k}{a}{\lambda}$ be the complement of $\lambda$ with respect a $k \times a$ rectangle, as illustrated in Figure \ref{figure:complement}.
\begin{figure}[ht]
\begin{tikzpicture}[scale=.3]
\draw[thick] (2,0)--(2,6)--(5,6)--(5,5)--(6,5)--(6,3.5)--(7.5,3.5)--(7.5,2.5)--(9.5,2.5)--(9.5,0)--cycle;
\draw (4.5,2.5) node {\small $\boxit{k}{a}{\lambda} $};
\draw[dashed] (5,6)--(9.5,6)--(9.5,2.5);
\draw (8,4.5) node {$\lambda$};
\draw (2,-.5)--(5.5,-.5); 
\draw (6,-.4) node {$k$};
\draw (2, -.7)--(2, -.3);
\draw (6.5,-.5)--(9.5,-.5) (1.5,0)--(1.5,2.2);
\draw (9.5, -.7)--(9.5, -.3);
\draw (1.5,2.8) node {$a$};
\draw (1.5,3.3)--(1.5,6);
\draw (1.3,0)--(1.7,0) (1.3,6)--(1.7,6);
\end{tikzpicture}
 \caption{The partition $\boxit{k}{a}{\lambda}$}
\label{figure:complement}
\end{figure}

\subsection*{\bf The Littlewood--Richardson coefficients,  $c^{\nu}_{\lambda, \mu}$: } These coefficients are indexed by three partitions.  They are the structure constants in the ring of symmetric functions with respect to the basis of Schur functions. That is,  for any partitions $\lambda$ and $\mu$, 
\begin{equation}\label{eq:LR_coeffs}
s_\lambda s_\mu =\sum_{\nu} c^\nu_{\lambda,\mu} s_\nu.
\end{equation}
These coefficients are important because they occur in many other mathematical contexts. For example,  in representation theory they occur as multiplicities of tensor products of irreducible representations of general linear groups and also as multiplicities in the decompositions of certain induced representations of the symmetric group.  In algebraic geometry,  they occur as structure coefficients when multiplying Schubert classes in the cohomology ring of the Grassmannian.

In Theorem \ref{symmetry:LR} and Proposition \ref{symmetry:LRtranslation} we show that they satisfy 
\begin{equation}
c^{\nu}_{\lambda, \mu} =c^{\boxit{l+m}{n}{\nu}}_{\boxit{l}{n}{\lambda}, \boxit{m}{n}{\mu}},  \quad \text{when $\lambda \subseteq (l^n)$, $\mu \subseteq (m^n)$ and $\nu \subseteq((l+m)^n)$} \label{eqLR}  
\end{equation}
(see Figure \ref{figure:LR})  and
\begin{equation}
c_{\lambda,\mu}^{\nu}  = c_{\lambda+(k^n),\mu}^{\nu+(k^n)},  \quad \text{when $\nu$ and $\lambda$ have length at most $n$.} \label{eqLR:trans}
 \end{equation}

\begin{figure}[ht]
\begin{tikzpicture}[scale=.3]
\tikzstyle{every node}=[inner sep=3pt]; 
\draw[thick] (1,.5)--(1,6)--(4,6)--(4,5)--(5,5)--(5,3.5)--(6.5,3.5)--(6.5,2.5)--(8,2.5)--(8,.5)--cycle;
\draw (3.5,2.5) node { $\boxit{l}{n}{\lambda} $};
\draw[dashed] (4,6)--(8,6)--(8,2.5);
\draw (6.7,4.5) node {$\lambda$};
\draw (1,0)--(4,0); 
\draw (4.5,0) node {$l$};
\draw (8, -.3)--(8, .3);
\draw (1, .3)--(1, -.3);
\draw (5.5,-0)--(8,0) (.3,.5)--(.3,2.4);
\draw (.3,3.1) node {$n$};
\draw (.3,3.7)--(.3,6);
\draw (.1,.5)--(.5,.5) (.1,6)--(.5,6);

\draw[thick] (12,.5)--(12,6)--(15,6)--(15,5)--(16,5)--(16,3.5)--(18.5,3.5)--(18.5,2.5)--(20.5,2.5)--(20.5,.5)--cycle;
\draw (14.5,2.5) node { $\boxit{m}{n}{\mu} $};
\draw[dashed] (15,6)--(20.5,6)--(20.5,2.5);
\draw (18,4.5) node {$\mu$};
\draw (12,-.3)--(15.1,-.3); 
\draw (16, -.3) node {$m$};
\draw (12, 0)--(12, -.6);
\draw (16.9,-.3)--(20.5,-.3) (11.5,.5)--(11.5,2.4);
\draw (20.5, 0)--(20.5, -.6);
\draw (11.4,3.1) node {$n$};
\draw (11.5,3.9)--(11.5,6);
\draw (11.3,0.5)--(11.7,0.5) (11.3,6)--(11.7,6);

\draw[thick] (24,.5)--(24,6)--(31,6)--(31,5)--(32,5)--(32,3.5)--(34.5,3.5)--(34.5,2.5)--(37.5,2.5)--(37.5,.5)--cycle;
\draw (28.5,2.5) node { $\boxit{l+m}{n}{\nu} $};
\draw[dashed] (26,6)--(37.5,6)--(37.5,1);
\draw (34.7,4.4) node {$\nu$};
\draw (23.5,.5)--(23.5,2.4)  (23.5,3.9)--(23.5,6) (23.2,.5)--(23.8,.5) (23.2,6)--(23.8,6);
\draw (23.3,3.1) node {$n$};
\draw (24,-.3)--(28.5,-.3) (32.5,-.3)--(37.5,-.3);
\draw (30.5,-.4) node {$l+m$};
\draw (24, -.6)--(24, .1) (37.5, -.6)--(37.5,-.1);
\end{tikzpicture}
\caption{The partitions occurring in the symmetry of Theorem \ref{symmetry:LR}} 
\label{figure:LR}
\end{figure}
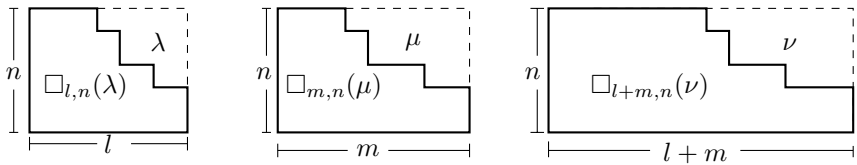

Other symmetries for Littlewood--Richardson, such as $c_{\lambda,\mu}^{\nu}=c_{\mu,\lambda}^{\nu}$ and $c_{\lambda,\mu}^{\nu}=c_{\lambda,\boxit{m}{n}{\nu}}^{\boxit{m}{n}{\mu}}$, have been extensively studied. In particular a number of bijective proofs for them have been found, see \cite{PakVallejoCones,PakVallejoBijections,Thomas-Yong} and the references therein. These other  symmetries, that generate a full symmetric group $S_3$, are  also obvious in the setting of Schubert calculus, since the numbers $c_{\lambda,\nu}^{\boxit{m}{n}{\mu}}$ interpret as triple intersections of Schubert varieties \cite[\S4, Eq. (23)]{Fulton}.  The symmetry \eqref{eqLR} is probably folklore, but we did not find it in the literature in the way presented here.  

It is, however,  equivalent to the symmetry mentioned in \cite[\S2. rem.(a)]{BerensteinZelevinsky}, as the generator of the $\ZZ_2$ subgroup in a $\ZZ_2 \times S_3$ group of symmetries (the factor $S_3$ is the group of other symmetries aforementioned).

Of course, the identity \eqref{eqLR:trans} is very easily established from the combinatorial descriptions of the Littlewood--Richardson coefficients (\emph{e.g.} the Littlewood--Richardson rule).
Similar identities will be shown to hold for Kronecker and plethysm coefficients, for which akin combinatorial descriptions are unavailable, which makes them more difficult to prove without Schur polynomials (or equivalent representation--theoretic considerations).

\subsection*{\bf The Kronecker coefficients, $\kro{\lambda}{\mu}{\nu}$:}  Understanding the Kronecker coefficients is a major open problem in the representation theory of the symmetric and the general linear group. These coefficients also appear naturally  in some interesting problems in quantum information theory \cite{luque-thibon1, luque-thibon2}, geometric complexity theory \cite{BLMW, BOR-complexity} and invariant theory.  We show that they also satisfy similar symmetries  in Theorem \ref{symmetry:kronecker} and Proposition \ref{symmetry:Krontranslation}:
\begin{equation}
\kro{\lambda}{\mu}{\nu}=\kro{\boxit{l}{mn}{\lambda}}{\boxit{m}{l n}{\mu}}{\boxit{n}{l m}{\nu}},   \label{eq:symmetry:kronecker}  
\end{equation}
when $\lambda \subseteq (l^{(mn)})$, $\mu \subseteq (m^{(l n)})$ and $\nu \subseteq(n^{(l m)})$, and
\begin{equation}
\kro{\lambda}{\mu}{\nu} = \kro{\lambda+((km)^l)}{\mu+(kl)^m}{\nu+(k)^{l m}}  \label{eq:symmetry:kronecker:trans}
\end{equation}
for $l$ and $m$ at least the length of $\lambda$, $\mu$ respectively.

Symmetry \eqref{eq:symmetry:kronecker} has also been shown in \cite[Proposition B.1]{StembridgeAppendix}, and identity 
\eqref{eq:symmetry:kronecker:trans}  in \cite[Theorem 3.1]{Vallejo2009}. In both cases, they are established using representation theory.

\subsection*{\bf The Plethysm coefficients, $\pleth{\lambda}{\mu}{\nu}$:}  The plethysm of two symmetric functions $f$ and $g$ is denoted by $f[g]$.  This operation was introduced by Littlewood \cite{Littlewood-plethysm} in the context of compositions of representations of the general linear groups.  Plethysm has been shown to have important applications to physics \cite{Wybourne} and invariant theory \cite{Foulkes}.   
We obtain  two pairs of symmetries for the coefficients $\pleth{\lambda}{\mu}{\nu}$ in the expansion of $s_\lambda[s_\mu]$ into Schur functions (Theorem \ref{symmetry:plethysm} and Proposition \ref{symmetry:plethtrans} on the one hand, Theorem \ref{symmetry:plethysm2} and Proposition \ref{translation:pleth2} on the other hand) :
\begin{align}
\pleth{\lambda}{\mu}{\nu}&=\pleth{\lambda}{\boxit{m}{n}{\mu}}{\boxit{m|\lambda|}{n}{\nu}} \quad \text{when $\mu \subseteq (m^n)$ and $\nu \subseteq ((m|\lambda|)^n)$,} \label{eqPlet} \\
\pleth{\lambda}{\mu}{\nu}&=\pleth{\lambda}{\mu+(k^n)}{\nu+((k|\lambda|)^n)} \quad \text{when $\nu$ has length at most $n$},  \label{eqPlet:trans}     \\
\pleth{\lambda}{\mu}{\nu}&=\pleth{\boxit{l}{r}{\lambda}}{\mu}{\boxit{ql}{n}{\nu}} \quad \text{when $\lambda \subseteq (l^r)$ and $\nu \subseteq ((q l)^n)$,}  \label{eqPlet2} \\
\pleth{\lambda}{\mu}{\nu}&=\pleth{\lambda+(k^r)}{\mu}{\nu+((qk)^n)} \quad \text{when when $\nu$ has length at most $n$}, \label{eqPlet2:trans}
\end{align}
where $r$ is the number of semistandard tableaux of shape $\nu$ filled with numbers 1 through $n$, and $q=r|\nu|/n$.
In both  \eqref{eqPlet} and  \eqref{eqPlet2}, the symmetries that we obtain involves taking complements with respect to two rectangles, see Figure \ref{figure:pleth}.

\begin{figure}[ht] 
\begin{tikzpicture}[scale=.25]
\tikzstyle{every node}=[inner sep=3pt]; 
\draw[thick] (0,0)--(5,0)--(5,1)--(4,1)--(4,2)--(3,2)--(3,4)--(2,4)--(2,5)--(0,5)--cycle;
\draw (1.5,2.5) node {$\lambda$};

\draw[thick] (11,-1)--(11,6)--(14,6)--(14,5)--(15,5)--(15,3.5)--(17.5,3.5)--(17.5,2.5)--(18.5,2.5)--(18.5,-1)--cycle;
\draw (14.3,1.5) node {\small $\boxit{m}{n}{\mu} $};
\draw[dashed] (14,6)--(18.5,6)--(18.5,2.5);
\draw (17,4.5) node {$\mu$};
\draw (11,-1.5)--(13.8,-1.5); 
\draw (14.8,-1.7) node {$m$};
\draw (11, -1.7)--(11, -1.3);
\draw (15.9,-1.5)--(18.5,-1.5) (10.3,-1)--(10.3,1.8);
\draw (18.5, -1.7)--(18.5, -1.3);
\draw (10.3,2.3) node {$n$};
\draw (10.3,3.3)--(10.3,6);
\draw (10,-1)--(10.6,-1) (10,6)--(10.6,6);

\draw[thick] (24,-1)--(24,6)--(32,6)--(32,5)--(34,5)--(34,2.5)--(35.5,2.5)--(35.5,1)--(37.5,1)--(37.5,-1)--cycle;
\draw (30,2) node { $\boxit{m |\lambda|}{n}{\nu} $};
\draw[dashed] (32,6)--(37.5,6)--(37.5,1);
\draw (35.7,4.4) node {$\nu$};
\draw (23,-1)--(23,1.8)  (23,3.2)--(23,6) (22.7,-1)--(23.2,-1) (22.7,6)--(23.2,6);
\draw (23,2.5) node {$n$};
\draw (24,-1.6)--(28.5,-1.6) (32.5,-1.6)--(37.5,-1.6);
\draw (30.5,-1.8) node {$m|\lambda|$};
\draw (24, -1.9)--(24,-1.3) (37.5, -1.9)--(37.5,-1.3);
\end{tikzpicture}

\begin{tikzpicture}[scale=.25]
\tikzstyle{every node}=[inner sep=3pt]; 

\draw[thick] (1,.5)--(1,9)--(4,9)--(4,7.5)--(5,7.5)--(5,6.5)--(6.5,6.5)--(6.5,4.5)--(8.5,4.5)--(8.5,.5)--cycle;
\draw (4,3.5) node { $\boxit{l}{r}{\lambda} $};
\draw[dashed] (4,9)--(8.5,9)--(8.5,2.5);
\draw (7,7.5) node {$\lambda$};
\draw (1,-.1)--(4.5,-.1); 
\draw (5,-.2) node {$l$};
\draw (8.5, -.4)--(8.5, .2);
\draw (1, .2)--(1, -.4);
\draw (5.5,-.1)--(8.5,-.1) (.3,.5)--(.3,3.4);
\draw (.3,4.5) node {$r$};
\draw (.3,5.5)--(.3,9);
\draw (.1,.5)--(.5,.5) (.1,9)--(.5,9);

\draw[thick] (12,0)--(19,0)--(19,1)--(17,1)--(17,3)--(15,3)--(15,4)--(12,4)--cycle;
\draw (14,1.5) node {$\mu$};

\draw[thick] (24,-1)--(24,6)--(34,6)--(34,5)--(35,5)--(35,3)--(35.5,3)--(35.5,2)--(37.5,2)--(37.5,-1)--cycle;
\draw (30,2) node { $\boxit{ql}{a}{\nu} $};
\draw[dashed] (32,6)--(37.5,6)--(37.5,1);
\draw (36.2,4.4) node {$\nu$};
\draw (23,-1)--(23,1.8)  (23,3.2)--(23,6) (22.7,-1)--(23.2,-1) (22.7,6)--(23.2,6);
\draw (23,2.5) node {$n$};
\draw (24,-1.6)--(28.5,-1.6) (32.5,-1.6)--(37.5,-1.6);
\draw (30.5,-1.8) node {$ql$};
\draw (24, -1.9)--(24,-1.3) (37.5, -1.9)--(37.5,-1.3);
\end{tikzpicture}

\caption{The partitions in the symmetry of Theorem \ref{symmetry:plethysm} or Equation \eqref{eqPlet} (top) and  Theorem \ref{symmetry:plethysm2} or Equation \eqref{eqPlet2} (bottom)}
 \label{figure:pleth}
\end{figure}
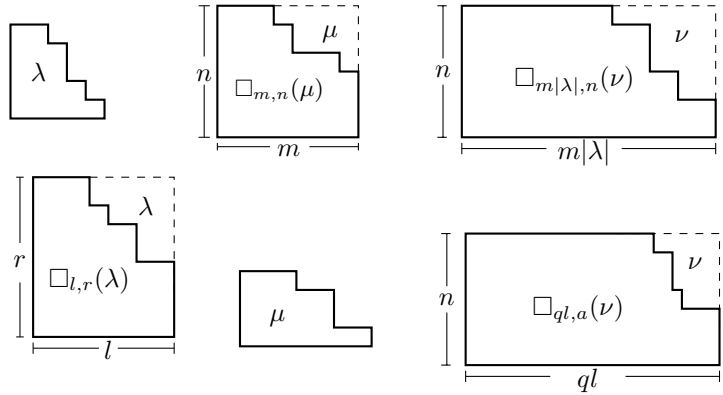

\subsection*{\bf The Kostka-Foulkes polynomials, $K_{\lambda,\mu}(t)$:}   The Kostka numbers, $K_{\lambda, \mu}$, are the coefficients in the decompositions of Schur functions in the basis of monomial functions
\[
s_{\lambda} = \sum_{\mu} K_{\lambda,\mu} \;m_{\mu}.
\]
Let us recall two important interpretations of the Kostka number $K_{\lambda,\mu}$. A combinatorial one:  it counts the number of semistandard tableaux  of shape $\lambda$ and weight $\mu$. And a representation--theoretic one: it is equal to the dimension of the weight space of weight $\mu$ in the irreducible representation $S_{\lambda}(\CC^n)$ of $GL_n(\CC)$. 

Rather than establishing a symmetry theorem for Kostka numbers, we will do it for the more general (one variable) \emph{Kostka-Foulkes polynomials} $K_{\lambda, \mu}(t)$. These are deformations of the Kostka numbers, which are recovered by evaluating the Kostka--Foulkes polynomials at $t=1$. The Kostka--Foulkes polynomials are the coefficients in the decompositions of Schur functions in the basis of Hall--Littlewood polynomials $P_\mu(X;t)$:
\begin{equation}\label{eq:stoP}
s_{\lambda}[X] = \sum_{\mu} K_{\lambda,\mu}(t) \;P_\mu(X;t).
\end{equation}
In Theorem \ref{symmetry:Kostka1var} and Proposition \ref{symmetry:Kostka1var:trans} we have obtained the following rectangular symmetries for the Kostka-Foulkes polynomials: 
\begin{align}
K_{\lambda, \mu}(t) &= K_{\boxit{k}{n}{\lambda}, \boxit{k}{n}{\mu}}(t) \quad \text{ when $\lambda, \mu \subseteq (k^n)$, and } \label{eq:Kostka}\\
K_{\lambda, \mu}(t) &= K_{\lambda+(k^n), \mu+(k^n)}(t) \quad \text{when $\mu$ has length at most $n$.}  \label{eq:Kostka:trans}
\end{align}
These identities still hold for the Kostka numbers $K_{\lambda,\mu}=K_{\lambda,\mu}(1)$, by specialization at $t=1$. 

The bijection in the solution of exercise 7.41 in \cite{StanleyBookEC2} gives a bijective proof of \eqref{eq:Kostka} for the Kostka numbers.  But  this does not generalize to a bijective proof for the identity for the Kostka-Foulkes polynomials. 

Note that it is immediate to obtain a bijective proof of \eqref{eq:Kostka:trans} for the Kostka numbers, that is easy to adapt for the Kostka--Foulkes polynomials (charactarized combinatorially as the generating function for the \emph{charge} of semistandard Young tableaux). 

The symmetry \eqref{eq:Kostka} may also be deduced from a much more elaborate result by Shimozono and Weyman on the Poincar{\'e} polynomials of graded characters of isotopic components of a natural family of $GL(\CC^n)$--modules supported in the closure of a nilpotent conjugacy class \cite[Eq. (2.16)]{ShimozonoWeyman}.


\section{Algebraic tools}
We assume that the reader is familiar with the various algebraic structures on  the space of symmetric functions, $\Sym$,
and in particular, with its main bases. For background information see \cite{LascouxBook, MacdonaldBook, StanleyBookEC2}.  We mainly follow the notation of \cite{StanleyBookEC2}, except for the fact that we draw our Ferrers diagrams using the French notation. 

Let $\PP{n}$ the set of all weakly decreasing sequences $(\lambda_1,\lambda_2,\ldots,\lambda_n)$ of nonnegative integers. When dealing with weakly decreasing sequences of integers, it will be convenient not to distinguish between sequences that differ only by trailing zeros. Therefore  $\PP{n}$ represents as well the set of integer partitions with length at most $n$. Given any two integer partitions $\lambda$ and $\mu$, $\lambda \subseteq \mu$ stands for the inclusion of the corresponding Ferrers diagrams, $\lambda'$ is the conjugate of $\lambda$, and $\lambda+\mu$ is the partition whose parts are the $\lambda_i+\mu_i$.  We use $\ell(\lambda)$ to denote the number of nonzero parts of  $\lambda$, i.e., its \emph{length}. Last, $(k^n)$ stands for the sequence with $n$ terms all equal to $k$.

Let $X=\{x_1,x_2,\ldots\}$ be a countable set of independent variables.  For $n \geq 0$, we set $X_n=\{x_1,x_2,\ldots,x_n\}$. The ring of symmetric polynomials, $\ZZ[x_1,x_2,\ldots,x_n]^{S_n}$, admits as a linear basis the Schur polynomials, $s_{\lambda}[X_n]=s_{\lambda}(x_1,x_2,\ldots,x_n)$, indexed by all $\lambda \in \PP{n}$. They are defined by
\begin{equation}\label{JacobiBialternant}
s_{(\lambda_1,\lambda_2,\ldots,\lambda_n)}[X]=\frac{\det(x_i^{\lambda_j+j-1})_{1\leq i,j\leq n}}{\det(x_i^{j-1})_{1\leq i,j\leq n}}.
\end{equation}
This is Jacobi's definition of Schur polynomials as ``bialternants" \cite[I.\S3.(3.1)]{MacdonaldBook}.

Let us consider now $\ZZ[x_1^{\pm 1},x_2^{\pm 1}, \ldots, x_n^{\pm 1}]^{S_n}$, the ring of symmetric Laurent polynomials in $n$ variables. Let $\P{n}$ be the set of all weakly decreasing sequences of integers $(\lambda_1,\lambda_2,\ldots,\lambda_n)$. (Compared to the definition of $\PP{n}$, we dropped the requirement of nonnegativity).  We define the \emph{Schur Laurent polynomials}  $s_{\lambda}[X_n]$, for $\lambda \in \P{n}$, again by \eqref{JacobiBialternant}. 
Denote by $\inv{X_n}$ the set of the inverses of the variables, i.e.,  $\inv{X_n}=\{\frac{1}{x_1},\frac{1}{x_2}, \ldots, \frac{1}{x_n}\}$. 
For any sequence $\lambda \in \P{n}$  and any integer $k$, define the new sequence
\[
\boxit{k}{n}{\lambda}=(k-\lambda_n, k-\lambda_{n-1}, \ldots, k-\lambda_1).
\]
This sequence is also in $\P{n}$. This extends the definition given in the introduction, when $\lambda$ is a partition that fits in the diagram of $(k^n)$. In that case, $\boxit{k}{n}{\lambda}$ is also a partition, ``complement''  of $\lambda$ in the rectangle.

It is immediate to check from \eqref{JacobiBialternant} the following properties:
\begin{lemma}
For all $\lambda \in \P{n}$ and all integers $k$, we have
\begin{equation}\label{translation}
s_{\lambda+(k^n)}[X_n]=(x_1 x_2 \cdots x_n)^k s_{\lambda}[X_n]. 
\end{equation}
and 
\begin{equation}\label{schur_of_inverses}
s_{\lambda}[\inv{X_n}]=s_{\boxit{0}{n}{\lambda}}[X_n].
\end{equation}
\end{lemma}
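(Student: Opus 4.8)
The plan is to derive both identities directly from the bialternant formula \eqref{JacobiBialternant}, treating them separately, since they correspond to two different elementary operations on the defining determinants: multiplying rows by a monomial for \eqref{translation}, and substituting inverses and reversing columns for \eqref{schur_of_inverses}.

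For \eqref{translation} I would write the numerator of $s_{\lambda+(k^n)}[X_n]$ as $\det(x_i^{\lambda_j+k+j-1})_{1\leq i,j\leq n}$. Since the shift by $k$ affects every column in the same way, I can factor $x_i^k$ out of the $i$-th row for each $i$; multilinearity of the determinant pulls out the common factor $\prod_{i=1}^n x_i^k=(x_1\cdots x_n)^k$, leaving exactly the numerator of $s_\lambda[X_n]$. The denominator is untouched, so dividing gives \eqref{translation}. This step is routine.

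The substance is in \eqref{schur_of_inverses}. Substituting $x_i\mapsto 1/x_i$ turns the numerator into $\det(x_i^{-(\lambda_j+j-1)})$ and the denominator into $\det(x_i^{-(j-1)})$, both carrying negative exponents. To bring these back to standard form I would factor $x_i^{-(n-1)}$ out of the $i$-th row in both determinants; the resulting prefactors $\prod_i x_i^{-(n-1)}$ are identical and cancel in the quotient. The numerator exponent in column $j$ then becomes $(n-j)-\lambda_j$ and the denominator exponent becomes $n-j$. I would next reverse the order of the columns via $j\mapsto n+1-j$: this permutation has sign $(-1)^{\binom{n}{2}}$, the same sign appears in numerator and denominator, and so it cancels as well. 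After the reversal, column $j$ of the numerator carries the exponent $-\lambda_{n+1-j}+(j-1)$, which is precisely $\boxit{0}{n}{\lambda}_j+j-1$ since $\boxit{0}{n}{\lambda}_j=-\lambda_{n+1-j}$; the denominator returns to the Vandermonde $\det(x_i^{j-1})$. Recognizing the quotient as \eqref{JacobiBialternant} for the sequence $\boxit{0}{n}{\lambda}$ completes the argument.

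The only real bookkeeping hazard is the column reversal: one must verify that the reversal permutation contributes the identical sign to numerator and denominator so that it drops out, and that after reversal the exponents line up exactly with the shifted parts $\boxit{0}{n}{\lambda}_j+(j-1)$. It is worth emphasizing that nothing in this argument uses nonnegativity of $\lambda$, so it is valid for all $\lambda\in\P{n}$, consistent with the Laurent-polynomial setting in which \eqref{JacobiBialternant} has just been extended.
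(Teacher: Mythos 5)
Your proof is correct and follows exactly the route the paper intends: the paper states the lemma as ``immediate to check from \eqref{JacobiBialternant}'', and your row-factoring argument for \eqref{translation} and your inverse-substitution with column reversal for \eqref{schur_of_inverses} (including the matching signs $(-1)^{\binom{n}{2}}$ and the exponent identification $\boxit{0}{n}{\lambda}_j=-\lambda_{n+1-j}$) are precisely the details being left to the reader. Your closing observation that nonnegativity of $\lambda$ is never used is also consistent with the paper's setting, where \eqref{JacobiBialternant} defines Schur Laurent polynomials for all $\lambda\in\P{n}$.
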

Formula \eqref{schur_of_inverses} is well known, see  \cite[(I.4.12.)]{LascouxBook}, \cite[Ex. 7.41]{StanleyBookEC2}  or \cite[B]{StembridgeAppendix}.

\begin{proposition}
The Schur Laurent polynomials $s_{\lambda}[X_n]$, for $\lambda \in \P{n}$, are a basis for $\ZZ[x_1^{\pm 1},x_2^{\pm 1}, \ldots, x_n^{\pm 1}]^{S_n}$. 
\end{proposition}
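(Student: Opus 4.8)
The plan is to prove that the Schur Laurent polynomials form a basis by establishing two things: that they span, and that they are linearly independent. The key structural observation is the translation formula \eqref{translation}, which says that multiplication by the product $(x_1 x_2 \cdots x_n)^k$ shifts the index $\lambda$ to $\lambda + (k^n)$. Since the product of the variables is invertible in the ring of symmetric Laurent polynomials, multiplication by $(x_1 \cdots x_n)^k$ is a ring automorphism of $\ZZ[x_1^{\pm 1}, \ldots, x_n^{\pm 1}]^{S_n}$ for every integer $k$.

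First I would recall the known fact that the ordinary Schur polynomials $s_\lambda[X_n]$, for $\lambda \in \PP{n}$, form a $\ZZ$-basis of the ordinary symmetric polynomial ring $\ZZ[x_1, \ldots, x_n]^{S_n}$. Next I would argue that any symmetric Laurent polynomial $f$ can be cleared of denominators: there is an integer $k \geq 0$ large enough that $(x_1 \cdots x_n)^k f$ has no negative exponents, hence lies in the ordinary polynomial ring. Expanding $(x_1 \cdots x_n)^k f$ in the ordinary Schur basis as $\sum_{\lambda \in \PP{n}} c_\lambda \, s_\lambda[X_n]$ and then multiplying back by $(x_1 \cdots x_n)^{-k}$, formula \eqref{translation} converts each $s_\lambda[X_n]$ into $s_{\lambda - (k^n)}[X_n]$, with $\lambda - (k^n) \in \P{n}$. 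This writes $f$ as a finite $\ZZ$-linear combination of Schur Laurent polynomials, proving that they span.

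For linear independence, suppose $\sum_\mu c_\mu \, s_\mu[X_n] = 0$ is a finite relation with $\mu \in \P{n}$. Choosing $k$ large enough that every $\mu + (k^n)$ appearing has nonnegative parts, I multiply through by $(x_1 \cdots x_n)^k$; by \eqref{translation} this yields $\sum_\mu c_\mu \, s_{\mu + (k^n)}[X_n] = 0$, now a relation among ordinary Schur polynomials indexed by distinct partitions in $\PP{n}$. Linear independence of the ordinary Schur basis forces all $c_\mu = 0$, and independence follows.

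I do not anticipate a serious obstacle here; the proof is essentially bookkeeping once one recognizes that the translation automorphism identifies the Laurent case with the polynomial case. The one point requiring a little care is checking that the index shift $\lambda \mapsto \lambda - (k^n)$ preserves distinctness and maps $\P{n}$ into itself (equivalently that adding $(k^n)$ is a bijection $\P{n} \to \P{n}$), so that the bijective correspondence between the two indexing sets transports the basis property cleanly; this is immediate from the componentwise definition.
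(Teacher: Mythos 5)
Your proposal is correct and follows essentially the same route as the paper: both clear denominators by multiplying by a power of $x_1 x_2 \cdots x_n$, use the translation formula \eqref{translation} to pass between ordinary Schur polynomials and Schur Laurent polynomials, and reduce linear independence to the known independence of the ordinary Schur basis. Your extra remark that $\lambda \mapsto \lambda + (k^n)$ is a bijection of $\P{n}$ preserving distinctness is a slightly more careful bookkeeping of a point the paper leaves implicit, but the argument is the same.
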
 
\begin{proof}
Any  $g\in \ZZ[x_1^{\pm 1},x_2^{\pm 1}, \ldots, x_n^{\pm 1}]^{S_n}$ can be written in the form $f/(x_1 x_2 \cdots x_n)^k$ for $f$ a symmetric polynomial and $k$ an integer. The polynomial $f$ expands as a linear combination of Schur polynomials. Dividing by $(x_1 x_2 \cdots x_n)^k$ expresses $g$ as a linear combination of Schur Laurent polynomials by \eqref{translation}. Therefore the Schur Laurent polynomials generate $\ZZ[x_1^{\pm 1},x_2^{\pm 1}, \ldots, x_n^{\pm 1}]^{S_n}$.

Given any linear relation between Schur Laurent polynomials, we obtain a linear relation between Schur polynomials, with the same coefficients, by multiplying by a big enough power of $(x_1 x_2 \cdots x_n)$. This shows that the Schur Laurent polynomials are linearly independent.
\end{proof}

\begin{remark}[Representation--theoretic interpretation of \eqref{schur_of_inverses}]
Let $V$ be a complex vector space of dimension $n$.
The Schur polynomials in $n$ variables are the formal characters of the irreducible polynomial representations of $GL(V)$, the $\mathbb{S}_{\lambda}(V)$ for $\lambda \in \PP{n}$. The Schur Laurent polynomials are the formal characters of its \emph{rational} irreducible representations.

Relation \eqref{translation} corresponds to the isomorphism 
\[
\mathbb{S}_{\lambda+(k^n)}(V) \cong \mathbb{S}_{\lambda} \otimes D_k
\]
where $D_k$ is the one dimensional representation where $g\in GL(V)$ acts as the multiplication by $\det(g)^k$.

The Schur Laurent polynomial $s_{\lambda}[\inv{X_n}]$ is the formal character of the dual representation $\mathbb{S}_{\lambda}(V^*)$. The identity \eqref{schur_of_inverses} means that
\[
\mathbb{S}_{\lambda}(V^*) \cong \mathbb{S}_{\boxit{0}{n}{\lambda}}(V).
\]

\end{remark}

We will now exploit \eqref{translation} and  \eqref{schur_of_inverses} systematically to produce symmetries for the Littlewood--Richardson coefficients, the Kronecker coefficients and the plethysm coefficients. In Section \ref{sec:Kostka}, we will extend   \eqref{translation} and  \eqref{schur_of_inverses}  to Hall--Littlewood polynomials, to produce symmetries for the Kostka--Foulkes polynomials.


\section{Littlewood--Richardson coefficients}  
In this section we will prove the rectangular symmetries for the Littlewood-Richardson coefficients.  
Let $n$ be  a nonnegative integer and $\lambda$ and $\mu$ be two partitions. 
If we specialize \eqref{eq:LR_coeffs} at $X_n=\{x_1,x_2,\ldots,x_n\}$, we get
\begin{equation}\label{eqLRn}
s_{\lambda}[X_n] s_{\mu}[X_n] = \sum_{\nu: \ell(\nu) \leq n} c^{\nu}_{\lambda,\mu} s_{\nu}[X_n].
\end{equation}
If $\ell(\lambda)$ or $\ell(\mu)$ is bigger than $n$, then the right--hand side is zero. Then all coefficients $c^{\nu}_{\lambda,\mu}$ in the right--hand side are zero. We assume now that $\lambda$ and $\mu$ have length at most $n$.
Let us replace each $x_i$ with $1/x_i$. We obtain
\[
s_{\lambda}[\inv{X_n}] s_{\mu}[\inv{X_n}] = \sum_{\nu: \ell(\nu) \leq n} c^{\nu}_{\lambda,\mu} s_{\nu}[\inv{X_n}].
\]
By \eqref{schur_of_inverses}, this can be written as
\[
s_{\boxit{0}{n}{\lambda}}[X_n] s_{\boxit{0}{n}{\mu}}[X_n] = \sum_{\nu: \ell(\nu) \leq n} c^{\nu}_{\lambda,\mu} s_{\boxit{0}{n}{\nu}}[X_n].
\]
Let $l \geq \lambda_1$ and $m \geq \mu_1$.  Let us multiply both sides with $(x_1 x_2 \cdots x_n)^{l+m}$. We get, by \eqref{translation},
\[
s_{\boxit{l}{n}{\lambda}}[X_n] s_{\boxit{m}{n}{\mu}}[X_n] = \sum_{\nu: \ell(\nu) \leq n} c^{\nu}_{\lambda,\mu} s_{\boxit{l+m}{n}{\nu}}[X_n].
\]
Now, let $\nu$ be a partition with length at most $n$. If $\nu_1 > l + m$, then $s_{\boxit{l+m}{n}{\nu}}[X_n]$ is not a Schur polynomial, and thus does not appear in this expansion. In this case $c_{\lambda,\mu}^{\nu}=0$. Else 
\[
c^{\nu}_{\lambda,\mu}=c_{\boxit{l}{n}{\lambda},\boxit{m}{n}{\mu}}^{\boxit{l+m}{n}{\nu}}.
\]

We have proved the following theorem.
\begin{theorem}\label{symmetry:LR}
Let $l$, $m$, $n$ be nonnegative integers and  $\lambda$, $\mu$ and $\nu$ be three partitions such that $\ell(\nu) \leq n$, $\lambda_1 \leq l$ and $\mu_1 \leq m$.  If $\lambda \subseteq (l^n)$, $\mu \subseteq (m^n)$ and $\nu \subseteq ((l+m)^n)$ then
\begin{equation}
c^{\nu}_{\lambda, \mu}=c^{\boxit{l+m}{n}{\nu}}_{\boxit{l}{n}{\lambda}, \boxit{m}{n}{\mu}}.\tag{\ref{eqLR}}
\end{equation}
Else $c^{\nu}_{\lambda, \mu}=0$.
\end{theorem}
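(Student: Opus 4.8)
The plan is to prove the identity by working in the ring of symmetric Laurent polynomials in a fixed finite set of variables $X_n$, and transporting the defining relation \eqref{eq:LR_coeffs} through the two elementary operations recorded in the Lemma: the substitution $x_i \mapsto 1/x_i$, governed by \eqref{schur_of_inverses}, and multiplication by $(x_1 x_2 \cdots x_n)^k$, governed by \eqref{translation}. The key observation is that each of these operations carries a Schur Laurent polynomial indexed by $\lambda$ to one indexed by a rectangular complement of $\lambda$, so that composing them realizes precisely the index transformation $\lambda \mapsto \boxit{l}{n}{\lambda}$ of the statement while leaving the structure constants $c^{\nu}_{\lambda,\mu}$ untouched.

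First I would specialize \eqref{eq:LR_coeffs} to the $n$ variables $X_n$, obtaining the finite expansion \eqref{eqLRn}, and observe that if $\ell(\lambda) > n$ or $\ell(\mu) > n$ then the left-hand side vanishes, forcing the relevant $c^{\nu}_{\lambda,\mu}$ to be zero; so I may assume $\ell(\lambda), \ell(\mu) \leq n$. Next I would replace each $x_i$ by $1/x_i$ and invoke \eqref{schur_of_inverses} to rewrite every factor, turning the relation into one among Schur Laurent polynomials indexed by $\boxit{0}{n}{(\cdot)}$. Then, choosing $l \geq \lambda_1$ and $m \geq \mu_1$, I would multiply through by $(x_1 x_2 \cdots x_n)^{l+m}$ and apply \eqref{translation} to absorb the powers into the indices, yielding
\[
s_{\boxit{l}{n}{\lambda}}[X_n]\, s_{\boxit{m}{n}{\mu}}[X_n] = \sum_{\nu:\ \ell(\nu)\leq n} c^{\nu}_{\lambda,\mu}\, s_{\boxit{l+m}{n}{\nu}}[X_n].
\]
Finally I would compare coefficients: since $\lambda_1 \leq l$ and $\mu_1 \leq m$, the indices on the left are genuine partitions, and by the basis property of the Schur Laurent polynomials the coefficient of $s_{\boxit{l+m}{n}{\nu}}[X_n]$ must agree on both sides, which is exactly \eqref{eqLR}.

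The main point to watch is the bookkeeping of nonnegativity: the identity takes the stated form only when each rectangular complement is an honest partition rather than a general element of $\P{n}$, and this is precisely what the hypotheses $\lambda \subseteq (l^n)$, $\mu \subseteq (m^n)$, $\nu \subseteq ((l+m)^n)$ encode. In particular, for a partition $\nu$ with $\ell(\nu) \leq n$ but $\nu_1 > l+m$, the index $\boxit{l+m}{n}{\nu}$ acquires a negative entry, so $s_{\boxit{l+m}{n}{\nu}}[X_n]$ is not among the honest Schur polynomials appearing on the left; matching coefficients then forces $c^{\nu}_{\lambda,\mu}=0$, which delivers the ``else'' clause. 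A minor subtlety is that the coefficient comparison relies on the fact that, for each fixed $k$, the map $\lambda \mapsto \boxit{k}{n}{\lambda}$ is an involution on $\P{n}$, hence a bijection, so distinct indices remain distinct and no two terms collapse; this guarantees that reading off the coefficient attached to a given $\nu$ is unambiguous.
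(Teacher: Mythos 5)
Your proposal is correct and follows essentially the same route as the paper's own argument: specialize \eqref{eq:LR_coeffs} to $X_n$, substitute $x_i \mapsto 1/x_i$ using \eqref{schur_of_inverses}, multiply by $(x_1 x_2 \cdots x_n)^{l+m}$ using \eqref{translation}, and compare coefficients in the basis of Schur Laurent polynomials, with the ``else'' clause coming from the case $\nu_1 > l+m$ where $\boxit{l+m}{n}{\nu}$ fails to be a partition. Your closing remark that $\lambda \mapsto \boxit{k}{n}{\lambda}$ is an involution on $\P{n}$, so that no terms collapse in the coefficient comparison, is a small point the paper leaves implicit but is a worthwhile clarification.
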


Note that the involution $\omega$ (that sends the elementary symmetric function $e_i$ to the complete sum $h_i$, see \cite{MacdonaldBook} I.\S2) produces  a similar symmetry with respect to three rectangles of the same width, instead of height.

In addition, we obtain the following translational symmetry for $c_{\lambda, \mu}^\nu$ by multiplying \eqref{eqLRn} and  $(x_1 x_2 \cdots x_n)^k$ and using \eqref{translation}. 

\begin{proposition}\label{symmetry:LRtranslation}
  Let $n \geq 0$ and $k$ be integers and $\lambda$, $\mu$, $\nu$ be partitions such that $n \geq \ell(\nu)$ and $\lambda+(k^n)$ is a partition (\emph{i.e} $\lambda_n+k \geq 0$).  If $\ell(\lambda) \leq n$ and $\nu+(k^n)$ is a partition, then
\begin{equation}
c_{\lambda,\mu}^{\nu}=c_{\lambda+(k^n),\mu}^{\nu+(k^n)}, \tag{\ref{eqLR:trans}}
\end{equation}
else $c_{\lambda,\mu}^{\nu}=0$.
\end{proposition}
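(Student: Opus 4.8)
The plan is to reproduce the derivation of Theorem~\ref{symmetry:LR}, but more simply: instead of evaluating at the inverse variables we only use the translation relation \eqref{translation}, multiplying by a single power of $x_1 x_2 \cdots x_n$. First I would specialize \eqref{eq:LR_coeffs} to the variables $X_n$ to obtain \eqref{eqLRn}, and then multiply both sides by $(x_1 x_2 \cdots x_n)^k$. Working in the ring $\ZZ[x_1^{\pm 1},\ldots,x_n^{\pm 1}]^{S_n}$, relation \eqref{translation} is valid for every integer $k$ and every $\lambda \in \P{n}$, so it converts $(x_1 \cdots x_n)^k s_{\lambda}[X_n]$ into $s_{\lambda+(k^n)}[X_n]$ and, term by term on the right, each $(x_1 \cdots x_n)^k s_{\nu}[X_n]$ into $s_{\nu+(k^n)}[X_n]$. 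This produces the identity of symmetric Laurent polynomials
\[
s_{\lambda+(k^n)}[X_n]\, s_{\mu}[X_n] = \sum_{\nu:\, \ell(\nu)\le n} c^{\nu}_{\lambda,\mu}\, s_{\nu+(k^n)}[X_n].
\]

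Next I would extract coefficients using the fact that the Schur Laurent polynomials form a basis of $\ZZ[x_1^{\pm 1},\ldots,x_n^{\pm 1}]^{S_n}$. Under the standing hypothesis that $\lambda+(k^n)$ is a partition (and with $\ell(\mu)\le n$, which we may assume since otherwise both coefficients in \eqref{eqLR:trans} vanish because $\mu$ cannot be contained in a partition of length at most $n$), the left-hand side is an honest product of two genuine Schur polynomials. Hence it expands as $\sum_{\rho} c^{\rho}_{\lambda+(k^n),\mu}\, s_{\rho}[X_n]$ over \emph{partitions} $\rho$ of length at most $n$; consequently, in the Schur Laurent basis the coefficient of any $s_{\eta}[X_n]$ indexed by an $\eta \in \P{n}$ that is not a partition is $0$. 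On the right-hand side the map $\nu \mapsto \nu+(k^n)$ is injective on partitions of length at most $n$, so $s_{\nu+(k^n)}[X_n]$ occurs exactly once, with coefficient $c^{\nu}_{\lambda,\mu}$. Comparing coefficients of $s_{\nu+(k^n)}[X_n]$ therefore gives $c^{\nu}_{\lambda,\mu}=c^{\nu+(k^n)}_{\lambda+(k^n),\mu}$ when $\nu+(k^n)$ is a partition, and $c^{\nu}_{\lambda,\mu}=0$ when $\nu+(k^n)$ has a negative entry.

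Finally I would treat the remaining degenerate case $\ell(\lambda)>n$: together with $\ell(\nu)\le n$ this forces $\lambda \not\subseteq \nu$, hence $c^{\nu}_{\lambda,\mu}=0$ (equivalently, $s_{\lambda}[X_n]=0$ makes \eqref{eqLRn} collapse to $0=\sum_{\ell(\nu)\le n} c^{\nu}_{\lambda,\mu}\,s_{\nu}[X_n]$). I expect the only delicate point to be the bookkeeping in the coefficient comparison: one must carefully track which of the shifted indices $\lambda+(k^n)$ and $\nu+(k^n)$ are genuine partitions and which are merely elements of $\P{n}$ with a negative entry, so that the vanishing half of the statement is attributed to the correct hypothesis. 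Everything else is a direct transcription of the one-line relation \eqref{translation} together with the linear independence of the Schur Laurent polynomials.
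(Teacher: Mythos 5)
Your proof is correct and takes essentially the same route as the paper, whose own argument for Proposition \ref{symmetry:LRtranslation} is precisely to multiply the specialization \eqref{eqLRn} by $(x_1 x_2 \cdots x_n)^k$, apply \eqref{translation}, and compare coefficients in the basis of Schur Laurent polynomials. The additional bookkeeping you supply---discarding the degenerate cases $\ell(\lambda)>n$ and $\ell(\mu)>n$, and attributing the vanishing $c_{\lambda,\mu}^{\nu}=0$ to the coefficient of a non-partition index $\nu+(k^n)$ in the Laurent basis---is exactly the detail the paper leaves implicit.
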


By means of the symmetry $c_{\lambda,\mu}^{\nu}=c_{\mu,\lambda}^\nu$, we obtain  $c_{\lambda,\mu}^{\nu}=c_{\lambda,\mu+(k^l)}^{\nu+(k^l)}$.


\section{Kronecker coefficients}

 In this section, we apply Formula (\ref{schur_of_inverses}) to derive a rectangular symmetry for the Kronecker coefficients. The  symmetry \eqref{symmetry:kronecker} was also found by Stembridge \cite{StembridgeAppendix}. The method is basically the same, except for the presentation: where  Stembridge uses representations of general linear groups we use their formal characters (symmetric Laurent polynomials).


We start with the following description of the Kronecker cofficients: let $X$ and $Y$ be two independant set of variables $x_1,x_2,\ldots$ and $y_1,y_2,\ldots$. Let $f[XY]$ stand for the evaluation of the symmetric function $f$ at all products $x_i y_j$, this is a symmetric function in $X$ and in $Y$ and expands in the basis of the $s_{\lambda}[X] s_{\mu}[Y]$. Then, for all partitions $\nu$, (see \cite[I.\S7.(7.9)]{MacdonaldBook})
\[
s_{\nu}[XY]=\sum_{\lambda, \mu} \kro{\lambda}{\mu}{\nu} s_{\lambda}[X] s_{\mu}[Y].
\] 

Let $l$ and $m$ be  nonnegative integers and $\nu$ a partition.
We specialize the above identity at the finite sets of variables $X_{l}$ and $Y_m$ and we get
\begin{equation}\label{kron_lm}
s_{\nu}[X_l Y_m]=\sum \kro{\lambda}{\mu}{\nu} s_{\lambda}[X_l] s_{\mu}[Y_m],
\end{equation}
where the sum is over all pairs of partitions $\lambda$ and $\mu$ such that $\ell(\lambda) \leq l$ and $\ell(\mu) \leq m$. If $l m > \ell(\nu)$ then the right--hand side is zero. In this case all coefficients in the expansion are zero.  We now assume that $l m \leq \ell(\nu)$. We replace each variable by its inverse. Since $\inv{(X_{l} Y_m)}=\inv{X_{l}} \inv{Y_m}$, we get
\[
s_{\nu}[\inv{X_l} \inv{Y_m}]=\sum \kro{\lambda}{\mu}{\nu} s_{\lambda}[\inv{X_l}] s_{\mu}[\inv{Y_m}].
\]
By \eqref{schur_of_inverses}, this means that 
\[
s_{\boxit{0}{l m}{\nu}}[X_l Y_m] = \sum \kro{\lambda}{\mu}{\nu} s_{\boxit{0}{l}{\lambda}}[X_l] s_{\boxit{0}{m}{\mu}}[Y_m].
\]
Let $n \in \ZZ$. We multiply the previous identity by $(\prod_{i,j} x_i y_j)^n$. Note that  
\[
\prod_{i,j} x_i y_j = (x_1 x_2 \cdots x_l)^m (y_1 y_2 \cdots y_m)^{l},
\]
thus
\[
\left(\prod_{i,j} x_i y_j \right)^n =  (x_1 x_2 \cdots x_l)^{mn} (y_1 y_2 \cdots y_m)^{l n}.
\]
By \eqref{translation}, we get
\[
s_{\boxit{n}{l m}{\nu}}[X_l Y_m] = \sum \kro{\lambda}{\mu}{\nu} s_{\boxit{mn}{l}{\lambda}}[X_l] s_{\boxit{l n}{m}{\mu}}[Y_m].
\]
Let us assume now that $n \geq \nu_1$, so that the right--and side is a Schur polynomial. We see that if $\lambda \not \subset ((mn)^{l}) $ or $\mu \not \subset ((l n)^m)$ then $\kro{\lambda}{\mu}{\nu}=0$, and else
\[
\kro{\lambda}{\mu}{\nu} = \kro{\boxit{mn}{l}{\lambda}}{\boxit{l n}{m}{\mu}}{\boxit{n}{l m}{\nu}}
\]
We reformulate this in a more symmetric way by applying it rather to the conjugates of $\lambda$ and $\mu$:
\begin{align*}
\kro{\lambda'}{\mu'}{\nu} 
&= \kro{\boxit{mn}{l}{\lambda'}}{\boxit{l n}{m}{\mu'}}{\boxit{n}{l m}{\nu}}\\
&= \kro{(\boxit{l}{mn}{\lambda})'}{(\boxit{m}{l n}{\mu})'}{\boxit{n}{l m}{\nu}}\\
\end{align*}
We also use the identity $\kro{\alpha'}{\beta'}{\gamma}=\kro{\alpha}{\beta}{\gamma}$ for any three partitions. We get the following theorem.

\begin{theorem}\label{symmetry:kronecker}
Let $l$, $m$ and $n$ be three nonnegative integers and $\lambda$, $\mu$ and $\nu$ be three partitions such that $\lambda_1 \leq l$, $\mu_1 \leq m$, $\nu_1 \leq n$.   If $\lambda \subseteq (l^{mn})$, $\mu \subseteq (m^{l n})$ and $\nu \subseteq (n^{l m})$, then 
\begin{equation}
\kro{\lambda}{\mu}{\nu}
=\kro{\boxit{l}{mn}{\lambda}}{\boxit{m}{l n}{\mu}}{\boxit{n}{l m}{\nu}}. \tag{\ref{eq:symmetry:kronecker}}
\end{equation}
Else $\kro{\lambda}{\mu}{\nu}=0$.
\end{theorem}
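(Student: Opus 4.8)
The plan is to imitate, almost verbatim, the argument just given for the Littlewood--Richardson coefficients, replacing the product $s_\lambda s_\mu$ by the operation $f \mapsto f[XY]$ on two alphabets. The starting point is the expansion
\[
s_{\nu}[XY]=\sum_{\lambda, \mu} \kro{\lambda}{\mu}{\nu}\, s_{\lambda}[X]\, s_{\mu}[Y],
\]
which plays the role that \eqref{eq:LR_coeffs} played before. First I would specialize both alphabets to finite sets $X_l$ and $Y_m$, which truncates the sum to partitions with $\ell(\lambda)\le l$ and $\ell(\mu)\le m$ and, by a count of variables, forces the whole right-hand side to vanish unless $\ell(\nu)\le lm$; by linear independence of the $s_{\lambda}[X_l]\,s_{\mu}[Y_m]$, this already records that $\kro{\lambda}{\mu}{\nu}=0$ outside that range.

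The observation that makes the substitution go through is that inverting every variable respects the product of the two alphabets: $\inv{(X_l Y_m)}=\inv{X_l}\,\inv{Y_m}$, since $1/(x_i y_j)=(1/x_i)(1/y_j)$. Thus replacing each variable by its reciprocal and applying \eqref{schur_of_inverses} on each side converts the identity into one whose indices are the complements $\boxit{0}{lm}{\nu}$, $\boxit{0}{l}{\lambda}$ and $\boxit{0}{m}{\mu}$. I would then multiply through by $(\prod_{i,j}x_i y_j)^n$ for an integer $n$; the point is that this monomial factors as $(x_1\cdots x_l)^{mn}(y_1\cdots y_m)^{ln}$, so that \eqref{translation} applies separately in each alphabet and turns the three complements into $\boxit{n}{lm}{\nu}$, $\boxit{mn}{l}{\lambda}$ and $\boxit{ln}{m}{\mu}$.

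Taking $n\ge\nu_1$ guarantees that the left-hand index is again a genuine partition, so the resulting identity is once more an honest Schur expansion in the variables $X_l$ and $Y_m$; comparing coefficients yields
\[
\kro{\lambda}{\mu}{\nu}=\kro{\boxit{mn}{l}{\lambda}}{\boxit{ln}{m}{\mu}}{\boxit{n}{lm}{\nu}},
\]
with the vanishing cases (when $\lambda\not\subseteq((mn)^l)$ or $\mu\not\subseteq((ln)^m)$) read off exactly as in the Littlewood--Richardson argument. The only place any real thought is needed is recasting this lopsided identity into the symmetric form of the statement: I would apply it to the conjugates $\lambda'$ and $\mu'$, use $\boxit{mn}{l}{\lambda'}=(\boxit{l}{mn}{\lambda})'$ together with its analogue for $\mu$, and finally invoke $\kro{\alpha'}{\beta'}{\gamma}=\kro{\alpha}{\beta}{\gamma}$ to clear all the primes. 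I expect the main obstacle to be purely bookkeeping: keeping the three pairs of rectangle dimensions $(l,mn)$, $(m,ln)$ and $(n,lm)$ straight through the two changes of alphabet and the conjugation, and checking that the containment hypotheses $\lambda\subseteq(l^{mn})$, $\mu\subseteq(m^{ln})$, $\nu\subseteq(n^{lm})$ match precisely the non-vanishing regime.
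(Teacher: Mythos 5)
Your proposal reproduces the paper's own proof essentially step for step: the same starting expansion $s_{\nu}[XY]=\sum_{\lambda,\mu}\g{\lambda}{\mu}{\nu}s_{\lambda}[X]s_{\mu}[Y]$ specialized at $X_l$, $Y_m$, the same use of $\inv{(X_lY_m)}=\inv{X_l}\inv{Y_m}$ with \eqref{schur_of_inverses}, the same multiplication by $\left(\prod_{i,j}x_iy_j\right)^n=(x_1\cdots x_l)^{mn}(y_1\cdots y_m)^{ln}$ via \eqref{translation}, and the same final symmetrization through $\boxit{mn}{l}{\lambda'}=(\boxit{l}{mn}{\lambda})'$ and $\g{\alpha'}{\beta'}{\gamma}=\g{\alpha}{\beta}{\gamma}$. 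It is correct as written, with no gaps to flag.
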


Similarly as for the Littlewood-Richardson coefficients, we also obtain the following translational symmetry for the Kronecker coefficients, by multiplying \eqref{kron_lm} by $(\prod_{i,j} x_i y_j)^n$ and using \eqref{translation}.

\begin{proposition}\label{symmetry:Krontranslation}
Let $\lambda$, $\mu$ and $\nu$ be partitions. Let $l \geq 0$, $m \geq 0$ and $k \in \ZZ$ be integers such that 
$l \geq \ell(\lambda)$, $m \geq \ell(\mu)$ and $\nu+(k^{mn})$ is a partition (\emph{i.e.} has no negative components). If $\ell(\nu) \leq l m$ and $\lambda+((km)^{l})$ and $\mu+((kl)^m)$ are partitions, then 
\begin{equation}
\kro{\lambda}{\mu}{\nu} = \kro{\lambda+((km)^l)}{\mu+(kl)^m}{\nu+(k)^{l m}} \tag{\ref{eq:symmetry:kronecker:trans}}
\end{equation}
and else $\kro{\lambda}{\mu}{\nu} =0$.
\end{proposition}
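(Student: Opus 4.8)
The plan is to reproduce, in the two-alphabet setting of the Kronecker coefficients, the argument that gave the translational symmetry for the Littlewood--Richardson coefficients (Proposition~\ref{symmetry:LRtranslation}). The starting point is the specialized expansion \eqref{kron_lm}, which holds for every partition $\nu$, both sides vanishing when $\ell(\nu) > lm$. I would multiply \eqref{kron_lm} through by $\left(\prod_{i,j} x_i y_j\right)^k$ and reuse the factorization already exploited in the proof of Theorem~\ref{symmetry:kronecker}, namely
\[
\left(\prod_{i,j} x_i y_j\right)^k = (x_1 x_2 \cdots x_l)^{mk}\,(y_1 y_2 \cdots y_m)^{lk}.
\]

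Next I would apply \eqref{translation} three separate times, once for each alphabet involved. Viewing $X_l Y_m$ as a single set of $lm$ variables, the full product raised to $k$ carries $s_\nu[X_l Y_m]$ to $s_{\nu+(k^{lm})}[X_l Y_m]$; the factor $(x_1\cdots x_l)^{mk}$ carries each $s_\lambda[X_l]$ to $s_{\lambda+((mk)^l)}[X_l]$; and the factor $(y_1\cdots y_m)^{lk}$ carries each $s_\mu[Y_m]$ to $s_{\mu+((lk)^m)}[Y_m]$. This yields the identity of symmetric Laurent polynomials
\[
s_{\nu+(k^{lm})}[X_l Y_m] = \sum \kro{\lambda}{\mu}{\nu}\, s_{\lambda+((mk)^l)}[X_l]\, s_{\mu+((lk)^m)}[Y_m],
\]
the sum still running over partitions $\lambda, \mu$ with $\ell(\lambda) \le l$ and $\ell(\mu) \le m$.

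The core of the proof is then a coefficient comparison in this identity. Because the products $s_\alpha[X_l]\, s_\beta[Y_m]$, with $\alpha \in \P{l}$ and $\beta \in \P{m}$, form a basis of the ring of Laurent symmetric polynomials in the two alphabets (apply the basis Proposition above to each alphabet and tensor), I would read off the coefficient of the single basis element $s_{\lambda+((mk)^l)}[X_l]\, s_{\mu+((lk)^m)}[Y_m]$ on each side. On the right-hand side, translation by a fixed sequence being injective, this basis element occurs exactly once and with coefficient $\kro{\lambda}{\mu}{\nu}$. On the left-hand side I use the standing hypothesis that $\nu+(k^{lm})$ is a partition to expand $s_{\nu+(k^{lm})}[X_l Y_m]$ by \eqref{kron_lm}; when both shifted sequences $\lambda+((mk)^l)$ and $\mu+((lk)^m)$ are genuine partitions, the coefficient there is $\kro{\lambda+((mk)^l)}{\mu+((lk)^m)}{\nu+(k^{lm})}$, and equating the two readings gives \eqref{eq:symmetry:kronecker:trans}.

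The degenerate situations make up the ``else'' clause, and the bookkeeping there is the step I would treat most carefully. If $\ell(\nu) > lm$ then $s_\nu[X_l Y_m]=0$ and $\kro{\lambda}{\mu}{\nu}=0$ at once. Otherwise, if $\lambda+((mk)^l)$ or $\mu+((lk)^m)$ fails to be a partition---which can happen only when $k<0$---then its index carries a negative part and so cannot appear in the expansion \eqref{kron_lm} of the genuine polynomial $s_{\nu+(k^{lm})}[X_l Y_m]$, whose indices are all partitions; the coefficient of that basis element on the left is therefore $0$, forcing $\kro{\lambda}{\mu}{\nu}=0$. The only real subtlety is to keep track of exactly which index pairs survive as partitions after the shifts and to invoke the uniqueness of the expansion in the Laurent Schur basis to justify these vanishings; everything else is the threefold routine use of \eqref{translation}.
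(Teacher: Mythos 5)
Your proposal is correct and follows essentially the same route as the paper, which proves this proposition in one line by multiplying \eqref{kron_lm} by $\bigl(\prod_{i,j} x_i y_j\bigr)^k$, factoring this as $(x_1\cdots x_l)^{mk}(y_1\cdots y_m)^{lk}$, and applying \eqref{translation}; you have merely spelled out the coefficient comparison in the basis $s_\alpha[X_l]\,s_\beta[Y_m]$ and the degenerate cases that the paper leaves implicit. You also (rightly) read the standing hypothesis as ``$\nu+(k^{lm})$ is a partition,'' silently correcting the typo $(k^{mn})$ in the statement.
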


\subsection*{The case of three rectangles}  An important class of Kronecker coefficients are those indexed by rectangular partitions. They are important in quantum information theory to model entanglement \cite{luque-thibon1, luque-thibon2} and also to advance the program of Geometric Complexity Theory \cite{BLMW}.  

The following corollary also appears in\cite[(C.1)]{StembridgeAppendix}.
\begin{corollary} \label{rectangles}
Let $k$ and $d$ be nonnegative integers.  If $k \leq d^2$, 
\begin{equation}\label{exterior}
g( (d^k), (d^k), (d^k)) = g( (d^{d^2-k}), (d^{d^2-k}), (d^{d^2-k}))
\end{equation}
and when $k>d^2$,  this Kronecker coefficient is zero. 
\end{corollary}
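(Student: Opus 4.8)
The plan is to obtain Corollary~\ref{rectangles} as a direct specialization of Theorem~\ref{symmetry:kronecker}, choosing the three rectangular shapes so that complementation in each rectangle maps a $(d^k)$ partition to another rectangular partition of the same kind. First I would set the parameters in the symmetry \eqref{eq:symmetry:kronecker} to $l=m=n=d$, so that the three governing rectangles become $(l^{mn})=(d^{d^2})$, $(m^{ln})=(d^{d^2})$ and $(n^{lm})=(d^{d^2})$, all equal to the square $d \times d^2$ (height $d^2$, width $d$). With $\lambda=\mu=\nu=(d^k)$ I would check the hypotheses: each part is $d$, so $\lambda_1=\mu_1=\nu_1=d\leq d$; and the containment $\lambda\subseteq (d^{d^2})$ amounts to $\ell((d^k))=k\leq d^2$, and likewise for $\mu$ and $\nu$. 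Thus exactly when $k\leq d^2$ all three containment conditions hold simultaneously.

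Next I would compute the complement $\boxit{d}{d^2}{(d^k)}$. Using the definition $\boxit{k}{n}{\lambda}=(k-\lambda_{n},\dots,k-\lambda_1)$ applied with rectangle width $d$ and height $n=d^2$ to the partition $(d^k)$ (which has $k$ parts equal to $d$ and $d^2-k$ trailing zeros), the complement has $d^2-k$ parts equal to $d$ followed by $k$ parts equal to $0$; that is, $\boxit{d}{d^2}{(d^k)}=(d^{\,d^2-k})$. The same computation applies verbatim to all three arguments, since the rectangle and the partition are identical in each slot. Substituting into \eqref{eq:symmetry:kronecker} therefore yields exactly
\begin{equation*}
g((d^k),(d^k),(d^k)) = g((d^{\,d^2-k}),(d^{\,d^2-k}),(d^{\,d^2-k})),
\end{equation*}
which is \eqref{exterior}. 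When $k>d^2$ the containment $\lambda\subseteq(d^{d^2})$ fails, so the "else" clause of Theorem~\ref{symmetry:kronecker} gives $g((d^k),(d^k),(d^k))=0$, establishing the final assertion.

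The only genuine point requiring care, and the step I would treat most explicitly, is the bookkeeping of the complement operation: one must verify that padding $(d^k)$ with the correct number of trailing zeros to view it as a sequence of length $d^2$, and then reversing and subtracting from $d$, indeed produces the rectangle $(d^{\,d^2-k})$ rather than some shifted or mislabeled shape. This is purely routine once the conventions of $\boxit{k}{n}{\lambda}$ from Section~2 are unwound, but it is where an off-by-one or a height/width transposition could slip in, so I would write it out. Everything else is immediate from Theorem~\ref{symmetry:kronecker} with the symmetric choice $l=m=n=d$.
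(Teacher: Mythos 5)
Your proposal is correct and follows exactly the paper's proof, which consists of the single instruction to set $l=m=n=d$ and $\lambda=\mu=\nu=(d^k)$ in Theorem~\ref{symmetry:kronecker}; your explicit verification of the containments and of the complement computation $\boxit{d}{d^2}{(d^k)}=(d^{\,d^2-k})$ merely spells out what the paper leaves implicit. (One cosmetic slip: $(d^{d^2})$ is a $d\times d^2$ rectangle, not a square, but this affects nothing.)
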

\begin{proof}
Set $l=m=n=d$ in Theorem \ref{symmetry:kronecker} and $\lambda=\mu=\nu=(d^k)$.
\end{proof}

\begin{remark}[Representation--theoretic interpretation of Corollary \ref{rectangles}.] 
Let $V$ be a complex vector space of dimension $d$. Consider the exterior algebra:
\[
\Lambda\left(V \otimes V \otimes V\right)=\bigoplus_{i=0}^{d^3} \Lambda^i\left(V \otimes V \otimes V\right)
\]
The group $GL(V) \times GL(V) \times GL(V)$ acts on this exterior algebra. The Kronecker coefficient $\kro{\lambda'}{\mu'}{\nu'}$ is the multiplicity of its irreducible representation $\mathbb{S}_{\lambda}(V) \otimes\mathbb{S}_{\mu}(V) \otimes\mathbb{S}_{\nu}(V) $. In particular, $\Lambda^i\left(V \otimes V \otimes V\right)$ contains non--trivial invariants for $SL(V) \times SL(V) \times SL(V)$ if and only if there exists an integer $k$ such that $i=kd$. Then the dimension of the subspace of invariants  is the rectangular Kronecker coefficient $\kro{(d^k)}{(d^k)}{(d^k)}$. Equation \eqref{exterior} follows from the $SL(V \otimes V \otimes V)$ natural isomorphism
\[
\Lambda^{i}\left(V^* \otimes V^* \otimes V^*\right) \cong \Lambda^{d^3-i}\left(V \otimes V \otimes V\right).
\]
\end{remark}

\begin{example}[Weight reduction for Kronecker coefficients]\label{computational:kro}
The naive algorithm to compute a Kronecker coefficient $\kro{\lambda}{\mu}{\nu}$ consists in converting Schur functions in power sums. Indeed, in the power sums basis, Kronecker products are trivial. 
This is the algorithm used, for instance, currently in SAGE \cite{SAGE} and the Maple package SF \cite{SF} . The cost of the computation depends then mainly on the weight  of $\lambda$, $\mu$ and $\nu$. (Note that other algorithms are available and efficient for partitions of short height, regardless of the weight, see for instance \cite{Christandl}). 

Theorem \ref{symmetry:kronecker} shows that $\kro{\lambda}{\mu}{\nu}$ is equal to other Kronecker coefficients, that may be of smaller weight. Precisely, let $N=|\lambda|=|\mu|=|\nu|$. Then the weight of $\kro{\boxit{l}{mn}{\lambda}}{\boxit{m}{l n}{\mu}}{\boxit{n}{l m}{\nu}}$ (i.e. the weight of the indexing partitions) is $l m n -N$. We can take $l=\lambda_1$, $m=\mu_1$ and $n=\nu_1$, the computation is reduced to the computation of a Kronecker coefficient of weight $\lambda_1 \mu_1 \nu_1 - N$. Last, we may make use of the symmetries under conjugation 
\[
\kro{\lambda}{\mu}{\nu}=\kro{\lambda}{\mu'}{\nu'}=\kro{\lambda'}{\mu}{\nu'}=\kro{\lambda'}{\mu'}{\nu}
\]
to reduce the computation to the computation of a Kronecker coefficient whose weight is the smallest among
\[
K \frac{\lambda_1}{\ell(\lambda)}-N,\quad
 K \frac{\mu_1}{\ell(\mu)}-N, \quad
K \frac{\nu_1}{\ell(\nu)}-N, \quad
\text{ and } K \frac{\lambda_1 \mu_1 \nu_1}{\ell(\lambda) \ell(\mu) \ell(\nu)}-N
\]
where $K$ stands for $\ell(\lambda) \ell(\mu) \ell(\nu)$.
\end{example}


\section{Plethysm coefficients}

The plethysm coefficients are the coefficients  $ \pleth{\lambda}{\mu}{\nu}$ of the plethysms of two Schur functions, expanded in the Schur basis:
\begin{equation}\label{plethysm}
s_{\lambda}[s_{\mu}]=\sum_{\nu} \pleth{\lambda}{\mu}{\nu} s_{\nu}.
\end{equation}
While there are algorithms for  computing $ \pleth{\lambda}{\mu}{\nu}$ (see for example \cite{Chen-Garsia-Remmel, Yang}), no satisfying combinatorial description has been found.  In this section we describe two rectangular symmetries satisfied by the plethysm coefficients. 

\subsection{Preliminaries on plethysm}

It will be useful to extend the plethysm operation to the case when $f$ is a symmetric function but $g=g(x_1,x_2,\ldots)$ is any formal series. This is done by means of the following  two rules: 
\begin{enumerate}
\item the map $f \mapsto f[g]$ is a morphism of algebras.
\item for any positive integer $n$, $p_n[g]=g(x_1^n,x_2^n,\ldots)$ (here $p_n$ is the $n$--th power sum symmetric function).
\end{enumerate}
This determines $f[g]$ for any symmetric function $f$, since the algebra of symmetric functions with rational coefficients is freely generated by the power sums $p_n$. When $g$ is a symmetric function, $f[g]$ defined as above coincides with the plethysm of $f$ with $g$, see \cite[I.\S8.]{MacdonaldBook} or \cite[Def. A.2.6]{StanleyBookEC2}.

This allows us to write, when specializing a plethysm of symmetric functions $f[g]$ to any set of variables $Y$ (in particular finite): 
\[
(f[g])[Y]=f[g[Y]]
\]
where the left--hand side is a plethysm of symmetric function, specialized at a set of variables $Y$, and the right--hand side is an ``extended plethysm'' of the symmetric function $f$ with the formal series $g(Y)$.

We will make use of the following property.
\begin{lemma}\label{lemma:homogeneous}
Let $f$ be a homogeneous symmetric function of degree $L$ and $g(x_1,x_2,\ldots)$ be a formal series. Let $x^w$ be a monomial in $x$. Then
\[
f[x^w g]=x^{L w} f[g].
\]
\end{lemma}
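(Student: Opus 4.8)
The plan is to reduce the statement to the defining properties of the extended plethysm, namely that $f \mapsto f[x^w g]$ is an algebra morphism and that $p_n[x^w g] = (x^w g)(x_1^n, x_2^n, \ldots)$. Since the algebra of symmetric functions over $\QQ$ is freely generated by the power sums $p_n$, and since the claim is linear over scalars, it suffices to verify the identity on a $\QQ$-basis of the degree-$L$ homogeneous component and to track how the two sides behave under products. Concretely, I would first establish the identity on the power sums and then bootstrap to an arbitrary homogeneous $f$ of degree $L$ by multiplicativity.

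The key computation is the base case $f = p_n$. Here $L = n$, and by rule (2) the extended plethysm gives
\[
p_n[x^w g] = (x^w g)(x_1^n, x_2^n, \ldots) = \left(x^w(x_1^n,x_2^n,\ldots)\right)\cdot g(x_1^n,x_2^n,\ldots) = x^{nw}\, p_n[g],
\]
where I use that substituting $x_i \mapsto x_i^n$ turns the monomial $x^w$ into $x^{nw}$. This is exactly the desired formula with $L = n$. The main obstacle — which is really the only subtle point — is confirming that this substitution commutes with multiplication in the way written, i.e. that evaluating the product $x^w g$ at the $n$-th powers factors as the product of the separate evaluations; this is immediate since evaluation $x_i \mapsto x_i^n$ is a ring homomorphism on the formal power series ring.

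Next I would promote the base case to products. Suppose $f = p_{n_1} p_{n_2} \cdots p_{n_r}$ is a product of power sums, so $f$ is homogeneous of degree $L = n_1 + \cdots + n_r$. Using that $f \mapsto f[x^w g]$ is an algebra morphism,
\[
f[x^w g] = \prod_{i=1}^r p_{n_i}[x^w g] = \prod_{i=1}^r x^{n_i w}\, p_{n_i}[g] = x^{(n_1 + \cdots + n_r)w} \prod_{i=1}^r p_{n_i}[g] = x^{Lw} f[g],
\]
where the last product is again $f[g]$ by the morphism property. Thus the identity holds for every monomial in the power sums that is homogeneous of degree $L$.

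Finally I would conclude by linearity. An arbitrary homogeneous symmetric function $f$ of degree $L$ is a $\QQ$-linear combination of such power-sum monomials, each itself homogeneous of degree $L$ (products of power sums whose indices sum to $L$ span the degree-$L$ component). Since $f \mapsto f[x^w g]$ and $f \mapsto f[g]$ are both linear, the factor $x^{Lw}$ pulls through each term uniformly, giving $f[x^w g] = x^{Lw} f[g]$ for all homogeneous $f$ of degree $L$. This completes the proof. The essential content is entirely in the base case; everything afterward is a formal consequence of the algebra-morphism structure and the grading.
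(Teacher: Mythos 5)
Your proof is correct and follows the same route as the paper, which verifies the identity for $f=p_n$ and then extends by the algebra-morphism property of $f\mapsto f[g]$; you have simply written out in full the details (the substitution $x_i\mapsto x_i^n$ being a ring homomorphism, the degree bookkeeping over products of power sums, and the final linearity step) that the paper's one-line proof leaves implicit. Nothing further is needed.
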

\begin{proof}
It is straightforward to check this when $f$ is a power sum $p_n$, and then to extend this to any symmetric function $f$ using that $f \mapsto f[g]$ is a morphism of algebras.
\end{proof}

\subsection{First pair of symmetries for plethysm coefficients}

Let $X_n=\{x_1, x_2, \ldots, x_n\}$ be a set of $n$ variables, where 
$n$ is a nonnegative integer, and let $\lambda$ and $\mu$ be two partitions. Evaluating \eqref{plethysm} at $X_n$ we get
\[
\left(s_{\lambda}[s_{\mu}]\right)[X_n]=\sum_{\nu\,:\, \ell(\nu) \leq n} \pleth{\lambda}{\mu}{\nu} s_{\nu}[X_n].
\]
But $\left(s_{\lambda}[s_{\mu}]\right)[X_n]=s_{\lambda}[s_{\mu}[X_n]]$. Thus
\[
s_{\lambda}[s_{\mu}[X_n]]=\sum_{\nu\,:\, \ell(\nu) \leq n} \pleth{\lambda}{\mu}{\nu} s_{\nu}[X_n].
\]
If $\ell(\mu) > n$, then the left--hand side is zero, and thus all coefficients $\pleth{\lambda}{\mu}{\nu}$ with $\ell(\nu) \leq n$ are zero. We assume now that $\ell(\mu) \leq n$. 
Replacing each variable by its inverse, we get
\begin{equation}\label{plethysm:inverses}
s_{\lambda}[s_{\mu}[\inv{X_n}]]=\sum_{\nu\,:\, \ell(\nu) \leq n} \pleth{\lambda}{\mu}{\nu} s_{\nu}[\inv{X_n}].
\end{equation}
By \eqref{schur_of_inverses}, we obtain
\[
s_{\lambda}[s_{\boxit{0}{n}{\mu}}[X_n]]=\sum_{\nu\,:\, \ell(\nu) \leq n} \pleth{\lambda}{\mu}{\nu} s_{\boxit{0}{n}{\nu}}[X_n].
\]
Let $m \geq \mu_1$. We multiply both sides by $(x_1 x_2 \cdots x_n)^{L m}$, where $L=|\lambda|$. 
Lemma \ref{lemma:homogeneous}  implies that on the left hand side we have
\begin{align*}
(x_1 x_2 \cdots x_n)^{L m} s_{\lambda}[s_{\boxit{0}{n}{\mu}}[X_n]]
&=
s_{\lambda}[ (x_1 x_2 \cdots x_n)^{m} s_{\boxit{0}{n}{\mu}}[X_n]]\\
&=
s_{\lambda}[ s_{\boxit{m}{n}{\mu}}[X_n]].
\end{align*}
Therefore, we obtain that 
\[
s_{\lambda}[s_{\boxit{m}{n}{\mu}}[X_n]]
= 
\sum_{\nu\,:\, \ell(\nu) \leq n} \pleth{\lambda}{\mu}{\nu} s_{\boxit{m L}{n}{\nu}}[X_n].
\]
Let $\nu$ be a partition such that $\ell(\nu) \leq n$. We see that if $\nu \not \subset ((mL)^n)$ then $\pleth{\lambda}{\mu}{\nu}=0$. Otherwise,
\[
\pleth{\lambda}{\mu}{\nu} = \pleth{\lambda}{\boxit{m}{n}{\mu}}{\boxit{m L}{n}{\nu}}.
\]
We have proved the following result.

\begin{theorem}\label{symmetry:plethysm} Fix nonnegative integers $m$ and $n$ and 
let $\lambda$,  $\mu$ and $\nu$ be partitions such that   $\mu \subseteq (m^n)$ and  $\ell(\nu) \le n$. 
If $\nu \subseteq ((m|\lambda|)^n)$,  then 
\begin{equation}
\pleth{\lambda}{\mu}{\nu} = \pleth{\lambda}{\boxit{m}{n}{\mu}}{\boxit{m |\lambda|}{n}{\nu}}. \tag{\ref{eqPlet}}
\end{equation}
Otherwise $\pleth{\lambda}{\mu}{\nu}=0$.
\end{theorem}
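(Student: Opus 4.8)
The plan is to transport the argument already carried out for the Littlewood--Richardson coefficients, replacing the product $s_\lambda s_\mu$ by the plethysm $s_\lambda[s_\mu]$. First I would specialize the defining identity \eqref{plethysm} to the finite alphabet $X_n$, using the compatibility $(s_\lambda[s_\mu])[X_n]=s_\lambda[s_\mu[X_n]]$ recorded in the preliminaries, to get
\[
s_\lambda[s_\mu[X_n]]=\sum_{\nu\,:\,\ell(\nu)\le n}\pleth{\lambda}{\mu}{\nu}\,s_\nu[X_n].
\]
When $\ell(\mu)>n$ the inner Schur polynomial $s_\mu[X_n]$ vanishes, forcing every listed coefficient to be zero, so I may assume $\ell(\mu)\le n$, i.e. $\mu\subseteq(m^n)$.

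The decisive move is to substitute $x_i\mapsto 1/x_i$ throughout and then renormalize by a power of the product of the variables. Since this substitution is an algebra map compatible with the extended plethysm, the left-hand side becomes $s_\lambda[s_\mu[\inv{X_n}]]$ and each summand on the right transforms term by term; applying \eqref{schur_of_inverses} to the inner argument $s_\mu$ and to each $s_\nu$ rewrites the identity as
\[
s_\lambda[s_{\boxit{0}{n}{\mu}}[X_n]]=\sum_{\nu\,:\,\ell(\nu)\le n}\pleth{\lambda}{\mu}{\nu}\,s_{\boxit{0}{n}{\nu}}[X_n].
\]

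Next, writing $L=|\lambda|$ and taking any $m\ge\mu_1$, I would multiply both sides by $(x_1\cdots x_n)^{Lm}$. Here the key tool is Lemma \ref{lemma:homogeneous}: because $s_\lambda$ is homogeneous of degree $L$, the prefactor can be pulled inside the outer plethysm at the cost of dividing its exponent by $L$, so that
\[
(x_1\cdots x_n)^{Lm}\,s_\lambda[s_{\boxit{0}{n}{\mu}}[X_n]]=s_\lambda\bigl[(x_1\cdots x_n)^{m}s_{\boxit{0}{n}{\mu}}[X_n]\bigr]=s_\lambda[s_{\boxit{m}{n}{\mu}}[X_n]],
\]
the last equality being \eqref{translation}. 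On the right-hand side, \eqref{translation} sends each $s_{\boxit{0}{n}{\nu}}[X_n]$ to $s_{\boxit{mL}{n}{\nu}}[X_n]$. Comparing coefficients in the Schur basis then yields the claim: for $\nu\subseteq((mL)^n)$ the complement $\boxit{mL}{n}{\nu}$ is again a partition and one reads off $\pleth{\lambda}{\mu}{\nu}=\pleth{\lambda}{\boxit{m}{n}{\mu}}{\boxit{mL}{n}{\nu}}$, while if $\nu\not\subseteq((mL)^n)$ the corresponding object is not a Schur polynomial and the coefficient must vanish.

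I expect the only genuinely delicate point to be the \emph{asymmetry} between the two rectangles, which is exactly what Lemma \ref{lemma:homogeneous} encodes: passing the determinantal twist through the outer Schur function multiplies its exponent by the degree $L$, so the rectangle for $\nu$ acquires width $mL=m|\lambda|$ rather than $m$. Everything else mirrors the derivation of \eqref{eqLR}; the one subtlety to keep in mind is that inverting the variables must be interpreted through the extended plethysm of a symmetric function with a Laurent series, rather than through a closed combinatorial formula, since no Littlewood--Richardson-type rule is available for $\pleth{\lambda}{\mu}{\nu}$.
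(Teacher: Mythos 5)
Your proposal is correct and follows essentially the same route as the paper's own proof: specialize \eqref{plethysm} at $X_n$, invert the variables and apply \eqref{schur_of_inverses}, then multiply by $(x_1\cdots x_n)^{|\lambda| m}$ and use Lemma \ref{lemma:homogeneous} together with \eqref{translation} to identify the left-hand side as $s_\lambda[s_{\boxit{m}{n}{\mu}}[X_n]]$ and each right-hand term as $s_{\boxit{m|\lambda|}{n}{\nu}}[X_n]$, reading off the symmetry and the vanishing case by comparing Schur coefficients. You also correctly isolated the one genuinely plethysm-specific point, namely that the homogeneity lemma rescales the determinantal factor by the degree $|\lambda|$, which is exactly why the rectangle for $\nu$ has width $m|\lambda|$.
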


\begin{proposition}\label{symmetry:plethtrans}
Let $\lambda$, $\mu$ and $\nu$ be partitions. Let $n \geq 0$ and $k \in \ZZ$ be integers such that $\ell(\nu) \leq n$ and $\mu+(k^n)$ is a partition.

If  $\nu+((k|\lambda|)^n)$ is a partition, then
\begin{equation}
\pleth{\lambda}{\mu}{\nu}=\pleth{\lambda}{\mu+(k^n)}{\nu+((k|\lambda|)^n)}, \tag{\ref{eqPlet:trans}}
\end{equation}
and else $\pleth{\lambda}{\mu}{\nu}=0$.
\end{proposition}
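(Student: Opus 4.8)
The plan is to mimic the derivation of Theorem \ref{symmetry:plethysm}, but to multiply by a power of $(x_1 x_2 \cdots x_n)$ directly, without first passing to the inverse variables. I would start from the specialization at the finite alphabet $X_n$ of the defining identity \eqref{plethysm}, namely
\[
s_\lambda[s_\mu[X_n]] = \sum_{\nu\,:\, \ell(\nu)\leq n} \pleth{\lambda}{\mu}{\nu}\, s_\nu[X_n],
\]
where the sum is restricted to $\ell(\nu)\leq n$ because $s_\nu[X_n]$ vanishes for $\ell(\nu)>n$. I would then multiply both sides by $(x_1 x_2 \cdots x_n)^{k|\lambda|}$.

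On the right-hand side, relation \eqref{translation} turns each $s_\nu[X_n]$ into $s_{\nu+((k|\lambda|)^n)}[X_n]$. On the left-hand side, Lemma \ref{lemma:homogeneous}, applied with $f=s_\lambda$ (homogeneous of degree $L=|\lambda|$), $g=s_\mu[X_n]$ and $x^w=(x_1\cdots x_n)^k$, gives
\[
(x_1\cdots x_n)^{k|\lambda|}\, s_\lambda[s_\mu[X_n]] = s_\lambda\bigl[(x_1\cdots x_n)^k s_\mu[X_n]\bigr] = s_\lambda[s_{\mu+(k^n)}[X_n]],
\]
the last equality again by \eqref{translation}; this is where the hypothesis that $\mu+(k^n)$ is a partition enters. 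Combining the two sides yields
\[
s_\lambda[s_{\mu+(k^n)}[X_n]] = \sum_{\nu\,:\,\ell(\nu)\leq n} \pleth{\lambda}{\mu}{\nu}\, s_{\nu+((k|\lambda|)^n)}[X_n].
\]

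The last step is to compare this with the expansion obtained by specializing \eqref{plethysm} for the partition $\mu+(k^n)$ directly, namely $s_\lambda[s_{\mu+(k^n)}[X_n]] = \sum_{\rho\,:\,\ell(\rho)\leq n} \pleth{\lambda}{\mu+(k^n)}{\rho}\, s_\rho[X_n]$. Both sides are now written in the basis of Schur Laurent polynomials, so I read off coefficients: when $\nu+((k|\lambda|)^n)$ is a partition, $s_{\nu+((k|\lambda|)^n)}$ is a genuine Schur polynomial and matching its coefficient yields the claimed identity; when $\nu+((k|\lambda|)^n)$ is not a partition (its smallest part $\nu_n+k|\lambda|$ being negative), the Schur Laurent polynomial $s_{\nu+((k|\lambda|)^n)}$ never occurs in the left-hand side, which forces $\pleth{\lambda}{\mu}{\nu}=0$.

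The step I expect to require the most care is the application of Lemma \ref{lemma:homogeneous} when $k<0$, since then $x^w=(x_1\cdots x_n)^k$ is a Laurent monomial rather than an honest monomial. The lemma's proof, however, only evaluates on power sums via $p_n[g]=g(x_1^n,x_2^n,\ldots)$, and the substitution $x_i\mapsto x_i^n$ sends $x^w$ to $x^{nw}$ regardless of the sign of the exponents, so the statement extends verbatim to Laurent monomials; I would simply note this. Everything else is the same formal bookkeeping in $\ZZ[x_1^{\pm 1},\ldots,x_n^{\pm 1}]^{S_n}$ that underlies Theorem \ref{symmetry:plethysm}.
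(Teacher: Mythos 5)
Your proof is correct and takes essentially the same route as the paper: the paper leaves this proposition's proof implicit, but its pattern for all the translational symmetries (Propositions \ref{symmetry:LRtranslation}, \ref{symmetry:Krontranslation}, \ref{symmetry:Kostka1var:trans}) is exactly your argument --- specialize at $X_n$, multiply by a power of $x_1 x_2\cdots x_n$, use \eqref{translation} on the right, and use Lemma \ref{lemma:homogeneous} to absorb the factor into the plethysm on the left, just as in the derivation of Theorem \ref{symmetry:plethysm}. Your observation that Lemma \ref{lemma:homogeneous} extends verbatim to Laurent monomials (needed when $k<0$) is a correct and worthwhile precision that the paper does not spell out.
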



\subsection{Second pair of symmetries for plethysm coefficients}

There is another way to exploit the alphabet of inverses for plethysm coefficients in order to obtain another rectangular symmetry. Recall that the combinatorial definition of Schur functions says that,  
\[
s_{\mu}=\sum_{T} x^{w(T)}
\]
where the sum is carried over all semistandard tableaux $T$ of shape $\mu$. The exponent $w(T)$ is the weight of $T$, i.e., its $i$--th component is the number of occurrences of $i$ in $T$.  For details see \cite[\S 7.10.1.]{StanleyBookEC2}.

As a consequence, 
\[
s_{\mu}[X_n]=\sum_{T \in \tab{\mu}{n}} x^{w(T)}
\]
where $\tab{\mu}{n}$ is the set of all semistandard tableaux of shape $\mu$ with entries in $\{1,2,\ldots,n\}$.

If $f$ is a symmetric function and $g$ a sum of monomials, then $f[g]$ is the specialization of $f$ at these monomials \cite[I.\S8]{MacdonaldBook}:
\[
f[x^{\omega_1}+x^{\omega_2}+\ldots]=f(x^{\omega_1},x^{\omega_2},\ldots).
\]
This holds in particular for $g=s_{\mu}[X]$.

Let $n$ be a nonnegative integer and $\lambda$ and $\mu$ be partitions.
We consider again \eqref{plethysm:inverses} but we expand $s_{\mu}[\inv{X_n}]$ in monomials. We get that
\begin{align*}
s_{\lambda}[s_{\mu}[\inv{X_n}]]&=s_{\lambda}\left[\sum_{T \in \tab{\mu}{n}} \frac{1}{x^{w(T)}}\right].
\end{align*}
Let $r=\# \tab{\mu}{n}$. Let us assume that $r\geq \ell(\lambda)$. We introduce a new set of variables $Y_r=\{y_1, y_2, \ldots, y_r\}$. Then $\left(s_{\lambda}[s_{\mu}[\inv{X_n}]]\right)$ is equal to the specialization of $s_{\lambda}[\inv{Y}]$ at the monomials $x^{w(T)}$ for $T \in \tab{\mu}{n}$. 
But 
\[
s_{\lambda}[\inv{Y_r}]=s_{\boxit{0}{r}{\lambda}}[Y_r].
\]
Let $l \geq \lambda_1$. Multiplying both sides with $(y_1 y_2 \cdots y_r)^{l}$ we get 
\[
(y_1 y_2 \cdots y_r)^{l} s_{\lambda}[\inv{Y_r}]=s_{\boxit{l}{r}{\lambda}}[Y_r].
\]
Let us now specialize at the monomials $x^{w(T)}$ for $T \in \tab{\nu}{n}$. We get
\[
\left( \prod_{T \in \tab{\mu}{n}} x^{w(T)}\right)^{l} s_{\lambda}[s_{\mu}[\inv{X_n}]] = s_{\boxit{l}{r}{\lambda}}[s_{\mu}[X_n]].
\]
Let us examine the product of the monomials $x^{w(T)}$. This product is symmetric in the variables $x_i$ (since the sum of the same monomials is symmetric, being equal to $s_{\mu}[X]$). Therefore there exists an integer $q$ such that
\[
\prod_{T \in \tab{\mu}{n}} x^{w(T)}=(x_1 x_2 \cdots x_n)^q
\]
Let us compare the total degrees on both sides. On the right--hand side, it is $qn$. For the left--hand side, let $M=|\mu|$. Observe that the total degree of each monomial $x^{w(T)}$ is $M$. There are $r$ monomials in the product. Thus, the total degree of the left--hand side is $r M$. This yields the equation $r M = q n$. From this we extract $q=r M/n$. We have now
\[
(x_1 x_2 \cdots x_n)^{q l} s_{\lambda}[s_{\mu}[\inv{X_n}]] = s_{\boxit{l}{r}{\lambda}}[s_{\mu}[X_n]].
\]
Thus
\begin{align*}
s_{\boxit{l}{r}{\lambda}}[s_{\mu}[X_n]]&=\sum_{\nu\,:\, \ell(\nu) \leq n} \pleth{\lambda}{\mu}{\nu} s_{\nu}[\inv{X_n}] \cdot (x_1 x_2 \cdots x_n)^{q l} ,\\
&= 
\sum_{\nu\,:\, \ell(\nu) \leq n} \pleth{\lambda}{\mu}{\nu} s_{\boxit{q l}{n}{\nu}}[X_n].
\end{align*}
From this we deduce that for $\nu$ partition with length at most $n$, if  $\nu \not \subset ((ql)^n)$ then $\pleth{\lambda}{\mu}{\nu}=0$ and else
\[
\pleth{\lambda}{\mu}{\nu} = \pleth{\boxit{l}{r}{\lambda}}{\mu}{\boxit{q l}{n}{\nu}}
\]

We have proved the following theorem.

\begin{theorem}\label{symmetry:plethysm2}
Let $l$ and $n$ be nonnegative integers and $\mu$, $\nu$, and $\lambda$  be partitions such that 
 $ \lambda_1 \leq l,$ and $ \ell(\nu) \leq n$.  Let $r$ be the number of semistandard tableaux of shape $\mu$ and entries in $\{1,2,\ldots,n\}$.  Then $q=r |\mu|/n$ is an integer, and we have that  if $\lambda \subseteq (l^r)$ and $\nu \subseteq ((q l)^n)$,
\begin{equation}
\pleth{\lambda}{\mu}{\nu} = \pleth{\boxit{l}{r}{\lambda}}{\mu}{\boxit{q l}{n}{\nu}}, \tag{\ref{eqPlet2}}
\end{equation}
and otherwise $\pleth{\lambda}{\mu}{\nu} =0$.
\end{theorem}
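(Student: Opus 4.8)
The plan is to reprise the mechanism behind Theorem~\ref{symmetry:plethysm}, but to apply the inverse-alphabet identity \eqref{schur_of_inverses} to the \emph{outer} partition $\lambda$ rather than to the inner one $\mu$. The difficulty is that $\lambda$ sits outside the plethysm, so before I can reach it I must re-express $s_{\lambda}[s_{\mu}[\inv{X_n}]]$ as an honest Schur polynomial in a fresh alphabet whose variables are indexed by the semistandard tableaux of shape $\mu$. The starting point is the inverse-substituted expansion \eqref{plethysm:inverses}.

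First I would write $s_{\mu}[\inv{X_n}]$ as its monomial sum $\sum_{T\in\tab{\mu}{n}} x^{-w(T)}$, set $r=\#\tab{\mu}{n}$, and (assuming $r\ge\ell(\lambda)$, as otherwise both sides vanish) introduce an auxiliary alphabet $Y_r=\{y_1,\dots,y_r\}$. The key observation is that $s_{\lambda}[s_{\mu}[\inv{X_n}]]$ is exactly the specialization of $s_{\lambda}[\inv{Y_r}]$ obtained by substituting the $r$ monomials $x^{w(T)}$ for the variables $y_t$. Now \eqref{schur_of_inverses} applied in the $Y_r$ alphabet gives $s_{\lambda}[\inv{Y_r}]=s_{\boxit{0}{r}{\lambda}}[Y_r]$, and multiplying by $(y_1\cdots y_r)^{l}$ and invoking \eqref{translation} upgrades this to $s_{\boxit{l}{r}{\lambda}}[Y_r]$ for any $l\ge\lambda_1$. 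Specializing back at the monomials $x^{w(T)}$, where the sum of the $r$ monomials is $s_{\mu}[X_n]$, then yields
\[
\Bigl(\prod_{T\in\tab{\mu}{n}} x^{w(T)}\Bigr)^{l}\; s_{\lambda}[s_{\mu}[\inv{X_n}]] \;=\; s_{\boxit{l}{r}{\lambda}}[s_{\mu}[X_n]].
\]

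The hard part will be identifying the prefactor $\prod_{T} x^{w(T)}$. Because the \emph{sum} of these monomials is the symmetric polynomial $s_{\mu}[X_n]$, their \emph{product} is symmetric as well; since the only symmetric monomials are powers of $x_1\cdots x_n$, the product must equal $(x_1\cdots x_n)^{q}$ for some integer $q$. Comparing total degrees ($r|\mu|$ on the left against $qn$ on the right) then forces $q=r|\mu|/n$ and simultaneously certifies that $q$ is an integer. With the prefactor understood, \eqref{translation} converts it into a shift by $ql$ applied to the inverse expansion $\sum_{\nu:\,\ell(\nu)\le n} \pleth{\lambda}{\mu}{\nu}\, s_{\nu}[\inv{X_n}]$, using $s_{\nu}[\inv{X_n}]=s_{\boxit{0}{n}{\nu}}[X_n]$ and $\boxit{0}{n}{\nu}+(ql)^n=\boxit{ql}{n}{\nu}$, to produce $s_{\boxit{l}{r}{\lambda}}[s_{\mu}[X_n]]=\sum_{\nu:\,\ell(\nu)\le n} \pleth{\lambda}{\mu}{\nu}\, s_{\boxit{q l}{n}{\nu}}[X_n]$. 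Finally, matching coefficients of Schur polynomials in $X_n$ reads off the result: if $\nu\not\subseteq((ql)^n)$ then $\boxit{ql}{n}{\nu}$ fails to be a partition and $\pleth{\lambda}{\mu}{\nu}=0$, while otherwise $\pleth{\lambda}{\mu}{\nu}=\pleth{\boxit{l}{r}{\lambda}}{\mu}{\boxit{ql}{n}{\nu}}$.
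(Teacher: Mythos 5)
Your proposal is correct and follows essentially the same route as the paper's own argument: the same auxiliary alphabet $Y_r$ indexed by the tableaux in $\tab{\mu}{n}$, the same application of \eqref{schur_of_inverses} and \eqref{translation} in the $Y$-variables, and the same symmetry-plus-degree-count identification of $\prod_{T} x^{w(T)}$ as $(x_1 \cdots x_n)^q$ with $q=r|\mu|/n$. The only difference is your explicit (and correct) remark that both sides vanish when $r<\ell(\lambda)$, a case the paper handles only implicitly.
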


\begin{proposition}\label{translation:pleth2}
Let $\lambda$, $\mu$ and $\nu$ be partitions. Let $n \geq 0$ and $k \in \ZZ$ be integers such that $\ell(\nu) \leq n$. 
Let $r$ and $q$ be defined as in Theorem \ref{symmetry:plethysm2}. 
Assume that $\lambda+(k^r)$ is a partition. If $\ell(\lambda) \leq r$ and $\nu+((qk)^n)$ is a partition, then
\[
\pleth{\lambda}{\mu}{\nu}=\pleth{\lambda+(k^r)}{\mu}{\nu+((qk)^n)},  \tag{\ref{eqPlet2:trans}}
\]
else $\pleth{\lambda}{\mu}{\nu}=0$.
\end{proposition}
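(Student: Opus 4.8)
The plan is to mimic the derivation of Theorem~\ref{symmetry:plethysm2}, but to replace the ``complement in a rectangle'' move (evaluation at the inverse variables) by the ``add a rectangle'' move, so that only \eqref{translation} is needed and no passage to inverses occurs. First I would recall the two facts extracted during the proof of Theorem~\ref{symmetry:plethysm2}. Writing $s_\mu[X_n]=\sum_{T\in\tab{\mu}{n}} x^{w(T)}$ as a sum of $r=\#\tab{\mu}{n}$ monomials, the element $s_\lambda[s_\mu[X_n]]$ is the specialization of the polynomial $s_\lambda[Y_r]=s_\lambda(y_1,\ldots,y_r)$ obtained by substituting the monomials $x^{w(T)}$ for the variables $y_i$; and the product of these monomials satisfies $\prod_{T\in\tab{\mu}{n}} x^{w(T)}=(x_1 x_2\cdots x_n)^q$ with $q=r|\mu|/n$ (already known to be an integer).

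The core step is to apply \eqref{translation} in the alphabet $Y_r$. Assuming $\ell(\lambda)\le r$ and that $\lambda+(k^r)$ is a partition, we have
\[
s_{\lambda+(k^r)}[Y_r]=(y_1 y_2\cdots y_r)^{k}\, s_{\lambda}[Y_r].
\]
Specializing both sides at the monomials $x^{w(T)}$, $T\in\tab{\mu}{n}$, the prefactor $(y_1\cdots y_r)^k$ becomes $\bigl(\prod_T x^{w(T)}\bigr)^k=(x_1\cdots x_n)^{qk}$, so that
\[
s_{\lambda+(k^r)}[s_\mu[X_n]]=(x_1 x_2\cdots x_n)^{qk}\, s_{\lambda}[s_\mu[X_n]].
\]
When $\ell(\lambda)>r$ the polynomial $s_\lambda[Y_r]$ vanishes, whence $s_\lambda[s_\mu[X_n]]=0$ and, since $\ell(\nu)\le n$, every coefficient $\pleth{\lambda}{\mu}{\nu}$ occurring here is zero; this accounts for that part of the ``else'' clause.

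Finally I would expand both sides in the Schur basis of $\ZZ[x_1,\ldots,x_n]^{S_n}$. On the right, $s_\lambda[s_\mu[X_n]]=\sum_{\ell(\nu)\le n}\pleth{\lambda}{\mu}{\nu}\,s_\nu[X_n]$, and multiplication by $(x_1\cdots x_n)^{qk}$ turns each $s_\nu[X_n]$ into $s_{\nu+((qk)^n)}[X_n]$ by \eqref{translation}; on the left, $s_{\lambda+(k^r)}[s_\mu[X_n]]=\sum_{\ell(\nu')\le n}\pleth{\lambda+(k^r)}{\mu}{\nu'}\,s_{\nu'}[X_n]$. Comparing the coefficient of $s_{\nu+((qk)^n)}[X_n]$ yields the asserted identity whenever $\nu+((qk)^n)$ is a genuine partition of length at most $n$; if it is not, the corresponding Schur polynomial does not occur on the left and $\pleth{\lambda}{\mu}{\nu}=0$, completing the ``else'' clause. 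The step I expect to require the most care is the bookkeeping of these boundary cases (negative $k$ pushing $\nu+((qk)^n)$ out of the set of partitions, and $\ell(\lambda)>r$), together with the justification that specialization at the monomials commutes with the algebra-morphism structure of plethysm; for $k<0$ this forces us to read $s_{\lambda+(k^r)}[Y_r]$ as a symmetric Laurent polynomial and to interpret its specialization through the factored form displayed above rather than directly through the extended-plethysm definition recalled in the preliminaries.
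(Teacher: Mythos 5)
Your proof is correct and follows the same route the paper intends (and leaves implicit): you reuse the two facts extracted in the proof of Theorem \ref{symmetry:plethysm2} --- that $s_\lambda[s_\mu[X_n]]$ is the specialization of $s_\lambda[Y_r]$ at the $r$ monomials of $s_\mu[X_n]$, and that $\prod_{T} x^{w(T)}=(x_1\cdots x_n)^q$ --- but apply \eqref{translation} in the alphabet $Y_r$ instead of passing to inverses, then compare Schur (Laurent) expansions, exactly the recipe the paper uses for its other translation propositions. Your closing worry about $k<0$ is moot, since the hypothesis that $\lambda+(k^r)$ is a partition makes $s_{\lambda+(k^r)}[Y_r]$ a genuine Schur polynomial; the rest of your boundary bookkeeping (the cases $\ell(\lambda)>r$ and $\nu+((qk)^n)$ not a partition) is handled correctly and matches the paper's pattern.
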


\begin{remark}The number $r=\#\tab{\mu}{n}$ is given by the \emph{hook--content formula} (see \cite[pg. 376]{StanleyBookEC2}). 
\end{remark}

\begin{example}[Weight reduction for Plethysm  coefficients]\label{computational:plethysm}
As for Kronecker products (see example \ref{computational:kro}), plethysms are trivial in the basis of power sums. Plethysms coefficients can thus be computed by means of conversions to the power sums basis (this is done this way in SAGE and SF \cite{SAGE, SF}).  When performing such a computation, it is very helpful to reduce the weight of the symmetric functions involved. This can be done, in some cases, by means of Theorem \ref{symmetry:plethysm}. 
The weight for the plethysm coefficient $a_{\lambda,\mu}^{\nu}$ is $N:=|\nu|=|\lambda|\cdot |\mu|$. Theorem \ref{symmetry:plethysm} shows that this plethysm coefficient is equal to another plethysm coefficient with weight $\mu_1 \ell(\nu) |\lambda|-N$. We can also make use of the symmetries (\cite[I.\S8.Ex.1(a)]{MacdonaldBook})
\[
a_{\lambda,\mu}^{\nu}=
\left\lbrace
\begin{matrix}
a_{\lambda,\mu'}^{\nu'}  \text{ when $|\mu|$ is even,}\\
a_{\lambda',\mu'}^{\nu'} \text{ when $|\mu|$ is odd.}
\end{matrix}
\right.
\]
Set $K=\ell(\mu) \ell(\nu) |\lambda|$. We can obtain, therefore, a reduction to the weight 
\[
K \cdot \min\left(\frac{\mu_1}{\ell(\mu)},\frac{\nu_1}{\ell(\nu)}\right) - N
\]
\end{example}


\section{Kostka--Foulkes polynomials}\label{sec:Kostka}
In this section we use the definition of the Kostka-Foulkes polynomials given in  Equation \eqref{eq:stoP} and the method of previous sections to derive rectangular symmetries for them.

The specialization of Equation \eqref{eq:stoP} at a finite set of variables $X_n=\{x_1,x_2,\ldots,x_n\}$, with $n \geq \ell(\mu)$, of the Hall--Littlewood polynomial $P_{\mu}$ is given \cite[III.(2.1)]{MacdonaldBook} by
\begin{equation}\label{HL_def}
P_\mu(X_n;t) = \frac{1}{v_{\mu,n}(t)} \sum_{w\in S_n} w\left(x_1^{\mu_1}\cdots x_n^{\mu_n} \prod_{i<j} \frac{x_i-tx_j}{x_i-x_j}\right).
\end{equation}
where $w \in S_n$ permutes the variables $x_i$ and
\[
v_{\mu,n}(t)=\prod_{i}\prod_{r=1}^{m_i(\mu)}\frac{1-t^r}{1-t}
\]
with $m_i$ the number of occurrences of $i$ in the sequence $\mu$, once it has been padded with zeros to get length $n$. 

As in the case of Schur polynomials, this definition still makes sense perfectly when $\mu \in \P{n}$ (with possible negative coordinates). We get the following generalization of \eqref{schur_of_inverses}:
\begin{lemma} \label{HL-inverses}
Let $\mu$ be a weakly decreasing sequence of integers, of length $n$, and $X=\{x_1,x_2,\ldots,x_n\}$. We have
\begin{equation}\label{HL_of_inverses}
P_\mu(\inv{X_n}; t) =  {P_{\boxit{0}{n}{\mu}}(X_n;t)}
\end{equation}
and for any integer $k$, 
\begin{equation}\label{HL_facto}
P_{\mu+(k^n)}(X_n; t) =  (x_1 x_2 \cdots x_n)^k \cdot P_\mu(X_n;t).
\end{equation}
\end{lemma}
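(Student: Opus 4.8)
The plan is to reduce both identities to the definition \eqref{HL_def} and to exploit that the symmetrization $\sum_{w\in S_n} w(\,\cdot\,)$ is insensitive to two operations: multiplication of the ``seed'' (the expression being symmetrized) by an $S_n$-invariant factor, and precomposition of the seed with a fixed permutation. Throughout, I will also use that the normalizing constant $v_{\mu,n}(t)$ depends on $\mu$ only through the multiplicities $m_i(\mu)$.

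I would begin with the factorization \eqref{HL_facto}, which is the easier of the two. Adding $(k^n)$ to $\mu$ multiplies the dominant monomial $x_1^{\mu_1}\cdots x_n^{\mu_n}$ by $(x_1 x_2 \cdots x_n)^k$, an $S_n$-invariant Laurent monomial. Since every $w\in S_n$ fixes this factor, it pulls out of the sum in \eqref{HL_def}. The only remaining point is that the normalizing constant is unchanged, i.e. $v_{\mu+(k^n),n}(t)=v_{\mu,n}(t)$; this holds because translating all parts by $k$ permutes the multiset of values of $\mu$ without altering the multiplicities $m_i$ that enter $v_{\mu,n}(t)$. Together these give \eqref{HL_facto} at once.

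For \eqref{HL_of_inverses} I would evaluate \eqref{HL_def} at $\inv{X_n}$. Because inverting the variables commutes with the $S_n$-action on indices, the evaluation passes inside the symmetrization, and I only need to track its effect on the seed $x^\mu \prod_{i<j}\frac{x_i-tx_j}{x_i-x_j}$. The monomial becomes $x^{-\mu}$, and clearing the factors $x_i x_j$ turns each $\frac{x_i-tx_j}{x_i-x_j}$ into $\frac{x_j-tx_i}{x_j-x_i}$. The key observation is that this substituted seed is precisely the image, under the longest element $w_0\in S_n$ (the reversal $i\mapsto n+1-i$), of the seed defining $P_{\boxit{0}{n}{\mu}}(X_n;t)$: since $\boxit{0}{n}{\mu}=(-\mu_n,\ldots,-\mu_1)$, applying $w_0$ sends its dominant monomial to $x^{-\mu}$, while the reindexing $i\mapsto n+1-i$ turns the condition $i<j$ into $j<i$ and so converts $\prod_{i<j}\frac{x_i-tx_j}{x_i-x_j}$ into $\prod_{i<j}\frac{x_j-tx_i}{x_j-x_i}$. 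Because two seeds related by a fixed permutation have the same full symmetrization, and because $v_{\boxit{0}{n}{\mu},n}(t)=v_{\mu,n}(t)$ (the values of $\boxit{0}{n}{\mu}$ being the negatives of those of $\mu$, hence with matching multiplicities), the two Hall--Littlewood polynomials coincide.

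I expect the only genuine care to lie in the bookkeeping of $\prod_{i<j}\frac{x_i-tx_j}{x_i-x_j}$ under these two operations: checking that evaluation at $\inv{X_n}$ clears the denominators cleanly to produce $\frac{x_j-tx_i}{x_j-x_i}$, and that the reversal $w_0$ swaps the index order so that its image of the product matches the inverted one, with the $t$ in the correct slot and the signs in numerator and denominator cancelling consistently. These checks are elementary but must be carried out precisely; once they are in place, both \eqref{HL_of_inverses} and \eqref{HL_facto} follow formally from the invariance of the symmetrization and of $v_{\mu,n}(t)$.
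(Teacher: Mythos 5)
Your proposal is correct and takes essentially the same route as the paper's proof: for \eqref{HL_of_inverses} you evaluate the seed of \eqref{HL_def} at $\inv{X_n}$, clear denominators to obtain $\prod_{i<j}\frac{x_j-tx_i}{x_j-x_i}$, and absorb the longest permutation $w_0$ into the full symmetrization over $S_n$, noting $v_{\boxit{0}{n}{\mu},n}(t)=v_{\mu,n}(t)$, exactly as the paper does. Your treatment of \eqref{HL_facto} (pulling the $S_n$-invariant factor $(x_1x_2\cdots x_n)^k$ out of the sum, with the multiset of multiplicities, hence $v_{\mu,n}(t)$, unchanged) is the ``straightforward'' argument the paper leaves implicit.
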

\begin{proof}
Specializing equation \eqref{HL_def} at $\inv{X}$, we get
\[
P_\mu(\inv{X_n}; t) = \frac{1}{v_{\mu,n}(t)} \sum_{w\in S_n} w\left(x_1^{-\mu_1}\cdots x_n^{-\mu_n} \prod_{i<j} \frac{x_i^{-1}-tx_j^{-1}}{x_i^{-1}-x_j^{-1}}\right).
\]
Observe that 
\[
\prod_{1\leq i<j\leq n} \frac{x_i^{-1}-tx_j^{-1}}{x_i^{-1}-x_j^{-1}} = \prod_{1\leq i<j\leq n} \frac{x_j - tx_i}{x_j-x_i}.\]
Let $w_0$ be the permutation that maps $i$ to $a-i+1$, i.e., the longest permutation. 
Then 
\[
 w_0\left(x_1^{-\mu_1}\cdots x_n^{-\mu_n} \prod_{1\leq i<j\leq n} \frac{x_j - tx_i}{x_j-x_i}\right) = x_1^{-\mu_n}\cdots x_n^{-\mu_1} \prod_{i<j} \frac{x_i-tx_j}{x_i-x_j}
 \]

For any sequence $\mu$ of length $n$, the number of times $i$ occurs in $\mu$ is equal to the number of times $-i$ occurs in $\boxit{0}{n}{\mu}$. Therefore the factor  
$v_{\mu,n}(t)$ is invariant under changing $\mu$ into $\boxit{0}{n}{\mu}$.
Equation \eqref{HL_of_inverses} follows now just by changing the order of summation. 

The proof of \eqref{HL_facto} is straightforward.
\end{proof}

Specializing equation \eqref{eq:stoP} at $\inv{X_n}$, we get
\[
s_\lambda[\inv{X_n}] = \sum_{\mu: \ell(\mu) \leq n} K_{\lambda,\mu}(t) P_\mu(\inv{X_n}; t).
\]
Using \eqref{schur_of_inverses} and Lemma \ref{HL-inverses}, we get
\[
s_{\boxit{0}{n}{\lambda}}[X_n]=\sum_{\mu\,:\, \ell(\mu) \leq n} K_{\lambda, \mu}(t) P_{\boxit{0}{n}{\mu}}({X_n};t). 
\]
Let $k$ be an integer. Let us multiply both sides with $(x_1 x_2 \ldots x_n)^k$. We get
\[
s_{\boxit{k}{n}{\lambda}}[X_n]=\sum_{\mu\,:\, \ell(\mu) \leq n} K_{\lambda, \mu}(t) P_{\boxit{k}{n}{\mu}}({X_n};t) 
\]
since it follows clearly from \eqref{HL_def} that $(x_1 x_2 \ldots x_n)^k P_{\nu}(X_n;t)=P_{\nu+(k^n)}(X_n;t)$ for any sequenece $\nu\in\P{n}$.

Assume now that $k \geq \lambda_1$. We see that if  $\mu \not\subset (k^n)$ then $K_{\lambda, \mu}(t)$ and else, 
\[
K_{\lambda, \mu}(t) = K_{\boxit{k}{n}{\lambda}, \boxit{k}{n}{\mu}}(t).
\]

We have proved the following result.
\begin{theorem} \label{symmetry:Kostka1var}
Let $k$ and $n$ be nonnegative integers. 

Let  $\lambda$ and $\mu$ be partitions such that $\lambda_1 \leq k$ and $\ell(\mu) \leq n$. 
If $\lambda \subseteq (k^n)$ and $\mu \subseteq (k^n)$ then
\[
K_{\lambda, \mu}(t) = K_{\boxit{k}{n}{\lambda}, \boxit{k}{n}{\mu}}(t).\tag{\ref{eq:Kostka}}
\]
Else $K_{\lambda, \mu}(t)=0$.
\end{theorem}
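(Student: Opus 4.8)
\subsection*{Proof plan}

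The plan is to treat the Kostka--Foulkes polynomials exactly as the earlier families were treated, since they are defined by the expansion \eqref{eq:stoP} of Schur functions into Hall--Littlewood polynomials, and this expansion plays the same structural role that \eqref{eq:LR_coeffs} did for the Littlewood--Richardson coefficients. The whole argument hinges on being able to transplant the two elementary identities \eqref{schur_of_inverses} and \eqref{translation} from Schur polynomials to Hall--Littlewood polynomials; this is precisely the content of Lemma \ref{HL-inverses}, which I would establish first. Granting it, the symmetry falls out of the recipe ``substitute the inverses of the variables, then multiply by the product of the variables.''

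Concretely, first I would specialize \eqref{eq:stoP} at a finite alphabet $X_n$ with $n \geq \ell(\mu)$, so that the sum runs over partitions $\mu$ of length at most $n$. Next I would substitute $x_i \mapsto 1/x_i$ throughout. On the left, \eqref{schur_of_inverses} turns $s_\lambda[\inv{X_n}]$ into $s_{\boxit{0}{n}{\lambda}}[X_n]$; on the right, the inverse formula \eqref{HL_of_inverses} turns each $P_\mu(\inv{X_n};t)$ into $P_{\boxit{0}{n}{\mu}}(X_n;t)$. Then, fixing $k \geq \lambda_1$, I would multiply both sides by $(x_1 x_2 \cdots x_n)^k$: by \eqref{translation} the left becomes $s_{\boxit{k}{n}{\lambda}}[X_n]$, and by \eqref{HL_facto} each term on the right becomes $P_{\boxit{k}{n}{\mu}}(X_n;t)$. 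The outcome is the single identity
\[
s_{\boxit{k}{n}{\lambda}}[X_n]=\sum_{\mu\,:\,\ell(\mu)\leq n} K_{\lambda,\mu}(t)\,P_{\boxit{k}{n}{\mu}}(X_n;t),
\]
from which one reads off the coefficients. For $\mu$ with $\mu_1 \leq k$ the index $\boxit{k}{n}{\mu}$ is a genuine partition, so comparing with \eqref{eq:stoP} applied to $\boxit{k}{n}{\lambda}$ gives $K_{\boxit{k}{n}{\lambda},\boxit{k}{n}{\mu}}(t) = K_{\lambda,\mu}(t)$; for $\mu_1 > k$ the index $\boxit{k}{n}{\mu}$ has a negative part, so $P_{\boxit{k}{n}{\mu}}$ is not a basis element of the standard Hall--Littlewood expansion and the corresponding $K_{\lambda,\mu}(t)$ must vanish, giving the ``else zero'' clause (the remaining way to leave the hypotheses, $\ell(\lambda)>n$, makes the Schur side degenerate and is handled the same way).

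The main obstacle is the inverse formula \eqref{HL_of_inverses}, since \eqref{HL_facto} is immediate from the explicit form \eqref{HL_def}. To prove \eqref{HL_of_inverses} I would start from \eqref{HL_def}, substitute $\inv{X_n}$, and observe that $\prod_{i<j}(x_i^{-1}-tx_j^{-1})/(x_i^{-1}-x_j^{-1})$ simplifies to $\prod_{i<j}(x_j-tx_i)/(x_j-x_i)$. Applying the longest permutation $w_0$ reverses the exponent sequence $(\mu_1,\ldots,\mu_n)$ and restores the standard product $\prod_{i<j}(x_i-tx_j)/(x_i-x_j)$, turning the summand into the one defining $P_{\boxit{0}{n}{\mu}}$. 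The one point requiring care is the normalizing factor $v_{\mu,n}(t)$: I would check that the number of occurrences of each value $i$ in $\mu$ equals the number of occurrences of $-i$ in $\boxit{0}{n}{\mu}$, so the multiset of multiplicities $m_i$ is merely relabeled and $v_{\mu,n}(t)$ is unchanged. A harmless reindexing of the sum over $S_n$ by $w \mapsto w w_0$ then completes the identity, and with Lemma \ref{HL-inverses} in hand the theorem is a routine substitution as above.
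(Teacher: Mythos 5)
Your proposal follows the paper's own proof essentially step for step: you establish Lemma \ref{HL-inverses} by exactly the paper's argument (simplifying the product of differences at $\inv{X_n}$, reindexing by the longest permutation $w_0$, and noting that $v_{\mu,n}(t)$ is unchanged because the multiplicities of $i$ in $\mu$ match those of $-i$ in $\boxit{0}{n}{\mu}$), and then apply the same recipe---specialize \eqref{eq:stoP} at $X_n$, substitute inverses via \eqref{schur_of_inverses} and \eqref{HL_of_inverses}, multiply by $(x_1 x_2 \cdots x_n)^k$ using \eqref{translation} and \eqref{HL_facto}, and read off coefficients. The argument is correct, including your handling of the vanishing cases $\mu_1 > k$ and $\ell(\lambda) > n$, which the paper treats implicitly in the same way.
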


Starting again from \eqref{eq:stoP}, specializing it at $X_n$  and multiplying with $(x_1x_2\cdots x_n)^k$, and using \eqref{translation} and \eqref{HL_facto}, we get the following result.
\begin{proposition}\label{symmetry:Kostka1var:trans}
Let $n$ and $k$ be integers, with $n \geq 0$. Let $\lambda$ and $\mu$ be partitions, with $\ell(\mu) \leq n$ and such that $\lambda+(k^n)$ is a partition. 

If $\ell(\lambda) \leq n$ and $\mu+(k^n)$ is a partition then
\begin{equation}
K_{\lambda,\mu}(t) = K_{\lambda+(k^n),\mu+(k^n)}(t), \tag{\ref{eq:Kostka:trans}}
\end{equation}
else $K_{\lambda,\mu}(t)=0$.
\end{proposition}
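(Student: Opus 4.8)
The plan is to follow the template of Theorem~\ref{symmetry:Kostka1var}, replacing the passage to inverse variables by a pure multiplication by a power of $(x_1\cdots x_n)$. First I would specialize \eqref{eq:stoP} at the finite alphabet $X_n$; since $P_\mu(X_n;t)=0$ when $\ell(\mu)>n$, only the $\mu$ with $\ell(\mu)\le n$ survive, giving $s_\lambda[X_n]=\sum_{\ell(\mu)\le n} K_{\lambda,\mu}(t)\,P_\mu(X_n;t)$. Multiplying both sides by $(x_1\cdots x_n)^k$ and applying \eqref{translation} on the left and \eqref{HL_facto} to each summand on the right yields
\begin{equation*}
s_{\lambda+(k^n)}[X_n]=\sum_{\mu:\,\ell(\mu)\le n} K_{\lambda,\mu}(t)\,P_{\mu+(k^n)}(X_n;t). \tag{$\star$}
\end{equation*}

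When $\ell(\lambda)\le n$ and $\lambda+(k^n)$ is a partition (as assumed), $s_{\lambda+(k^n)}[X_n]$ is a genuine Schur polynomial of length at most $n$, so I would also expand it directly through \eqref{eq:stoP} as $\sum_{\ell(\nu)\le n} K_{\lambda+(k^n),\nu}(t)\,P_\nu(X_n;t)$. Comparing this with $(\star)$ and using that the Hall--Littlewood polynomials $P_\nu(X_n;t)$ with $\ell(\nu)\le n$ are linearly independent, I would read off coefficients term by term. Since $\mu\mapsto\mu+(k^n)$ is injective, each partition of the form $\nu=\mu+(k^n)$ matches, giving $K_{\lambda,\mu}(t)=K_{\lambda+(k^n),\mu+(k^n)}(t)$. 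For $k\ge0$ every index $\mu+(k^n)$ is automatically a partition; to absorb the case $k<0$ it is cleanest to observe that the desired identity is invariant under the substitution $(\lambda,\mu,k)\mapsto(\lambda+(k^n),\mu+(k^n),-k)$, so one may assume $k\ge0$ and keep all Hall--Littlewood indices genuine.

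It remains to verify the ``else'' clause $K_{\lambda,\mu}(t)=0$, and this is where I expect the only real difficulty. If $\ell(\lambda)>n$ then $s_\lambda[X_n]=0$ by \eqref{JacobiBialternant}, and the specialized \eqref{eq:stoP} together with linear independence forces $K_{\lambda,\mu}(t)=0$ for all $\mu$ with $\ell(\mu)\le n$. The delicate case is $\ell(\lambda)\le n$ while $\mu+(k^n)$ fails to be a partition: then $\mu_n+k<0$, so $k<0$ and $\mu_n<-k$, whereas the hypothesis that $\lambda+(k^n)$ is a partition forces $\lambda_n\ge-k$, hence $\lambda_n>\mu_n$. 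The vanishing is not visible from $(\star)$ and is instead read off from the dominance-order support of the Kostka--Foulkes polynomials: $K_{\lambda,\mu}(t)\ne0$ would require $\lambda\trianglerighteq\mu$ with $|\lambda|=|\mu|$, and the tail inequality of dominance at position $n-1$, combined with $\ell(\lambda),\ell(\mu)\le n$, gives $\lambda_n\le\mu_n$, contradicting $\lambda_n>\mu_n$. Hence $K_{\lambda,\mu}(t)=0$, which finishes the argument.
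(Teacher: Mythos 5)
Your proposal is correct, and its computational core---specialize \eqref{eq:stoP} at $X_n$, multiply by $(x_1x_2\cdots x_n)^k$, apply \eqref{translation} and \eqref{HL_facto}, and compare Hall--Littlewood coefficients against the direct expansion of $s_{\lambda+(k^n)}[X_n]$---is exactly the paper's (one-sentence) proof. Where you genuinely diverge is in the treatment of non-partition indices. The paper's setup (the observation before Lemma \ref{HL-inverses} that \eqref{HL_def} makes sense for any $\mu \in \P{n}$, together with \eqref{HL_facto}) is designed so that the Laurent polynomials $P_\nu(X_n;t)$, $\nu \in \P{n}$, are linearly independent, by the same clearing-denominators argument used for Schur Laurent polynomials; a single comparison then handles all $k \in \ZZ$ uniformly, with the identity falling out when $\mu+(k^n)$ is a partition and the vanishing $K_{\lambda,\mu}(t)=0$ falling out when it is not, simply because no non-partition index occurs in the genuine expansion. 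You instead refuse to leave the honest partition-indexed Hall--Littlewood basis: you reduce the identity to $k \geq 0$ via the involution $(\lambda,\mu,k)\mapsto(\lambda+(k^n),\mu+(k^n),-k)$ (your hypothesis-checking here is correct), and you patch the one vanishing case this reduction cannot reach ($k<0$ with $\mu_n+k<0$) by invoking the dominance-order support of Kostka--Foulkes polynomials, with the tail estimate $\lambda_n \leq \mu_n$ against $\lambda_n \geq -k > \mu_n$; that estimate is right, and the support fact ($K_{\lambda,\mu}(t)=0$ unless $|\lambda|=|\mu|$ and $\lambda \geq \mu$ in dominance order) is standard, though external to the paper, which never needs it. In short: your route buys freedom from verifying linear independence of the Laurent Hall--Littlewood family, at the price of a case split on the sign of $k$ and an imported triangularity theorem; the paper's route is uniform in $k$ and self-contained within its own Laurent formalism, exactly parallel to its proof of Theorem \ref{symmetry:Kostka1var}.
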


\section*{Acknowledgments}  The authors want to thank J. Stembridge for sending us his preprints, and to R. King, F. Bergeron, and O. Azenhas for helpful conversations.
They also thank M. Zabrocki for his help in understanding how these symmetries generalize to Macdonald polynomials. The results of this investigation will appear as a separate note. 

R. Orellana is grateful for the hospitality of the University of Sevilla and IMUS. E. Briand and M. Rosas have been partially supported by projects MTM2010--19336, MTM2013-40455-P, FQM--333, P12--FQM--2696 and FEDER.  R. Orellana was partially supported by NSF Grant DMS-130512.

\bibliographystyle{amsplain}
\bibliography{AlphabetOfInverses.bib}




\end{document}